\def\tsc#1{\csdef{#1}{\textsc{\lowercase{#1}}\xspace}}
\newtheorem{theorem}{Theorem}[section]
\newtheorem{lemma}[theorem]{Lemma}
\begin{document}
\let\WriteBookmarks\relax
\def\floatpagepagefraction{1}
\def\textpagefraction{.001}

% Short title
\shorttitle{SVD-inv}    

% Short author
\shortauthors{Y. Zhang \textit{et~al.}}  

\title[mode=title]{Differentiable SVD based on Moore-Penrose Pseudoinverse for Inverse Imaging Problems}%

\author[1]{Yinghao Zhang}[orcid=0000-0001-8501-6180]
\cormark[1]
\author[1]{Yue Hu}[orcid=0000-0002-4648-611X]
\cormark[1]
\cortext[cor1]{e-mail: yhao-zhang@stu.hit.edu.cn; huyue@hit.edu.cn}
\address[1]{School of Electronics and Information Engineering, Harbin Institute of Technology, Harbin, China}

%% Abstract
\begin{abstract}
	Low-rank regularization-based deep unrolling networks have achieved remarkable success in various inverse imaging problems (IIPs). However, the singular value decomposition (SVD) is non-differentiable when duplicated singular values occur, leading to severe numerical instability during training. In this paper, we propose a differentiable SVD based on the Moore-Penrose pseudoinverse to address this issue. To the best of our knowledge, this is the first work to provide a comprehensive analysis of the differentiability of the trivial SVD. Specifically, we show that the non-differentiability of SVD is essentially due to an underdetermined system of linear equations arising in the derivation process. We utilize the Moore-Penrose pseudoinverse to solve the system, thereby proposing a differentiable SVD. A numerical stability analysis in the context of IIPs is provided. Experimental results in color image compressed sensing and dynamic MRI reconstruction show that our proposed differentiable SVD can effectively address the numerical instability issue while ensuring computational precision. Code is available at \url{https://github.com/yhao-z/SVD-inv}.
\end{abstract}

%% Keywords
\begin{keywords}
  differentiable SVD \sep Moore-Penrose pseudoinverse \sep inverse imaging problem
  \end{keywords}

\maketitle

%% Add \usepackage{lineno} before \begin{document} and uncomment 
%% following line to enable line numbers
%% \linenumbers

%% main text
%%

\section{Introduction}
% 低秩正则化已经在很多领域得到了广泛的应用，Examples include, but not limited to, recommendation systems [20, 21], Visual Tracking [22], 3D Reconstruction [23, 24, 25], Salient Object Detection [26, 27, 28, 29]. 在线性逆问题中，低秩正则化同样具有举足轻重的作用，并且已经在例如Matrix/Tensor Completion [30, 31]、医学图像重建、遥感图像去噪、目标检测等应用取得了非常好的效果。一般地，这些方法都具有两个重要的组成部分，或者说，两个理论构建的阶段：1）构建低秩正则化的重建模型，例如秩最小模型/RPCA模型。其中，由于秩是非凸的，因此常常利用矩阵/张量的核范数作为秩的凸包络来进行约束。2）设计迭代优化求解算法，例如采用ADMM、PGD等算法。由于核范数正则化的存在，求解的过程中必然伴随着奇异值分解。
Low-rank regularization has found extensive application in various fields \cite{hu2021low}. Examples include, but are not limited to, recommendation systems \cite{xu2020multi}, visual tracking \cite{sui2018visual}, 3D reconstruction \cite{agudo2018image}, and salient object detection \cite{peng2016salient}. In the context of inverse imaging problems (IIPs), low-rank regularization plays a pivotal role and has demonstrated remarkable effectiveness in applications such as matrix/tensor completion \cite{candes2011robust}, medical image reconstruction \cite{yao2018efficient} and restoration of remote sensing images \cite{zeng2020hyperspectral}. 

Generally, the low-rank regularization method for IIPs consists of two essential components or, in other words, two stages of theoretical construction: 1) the development of a reconstruction model incorporating low-rank regularization, such as the rank minimization model \cite{candes2010power}/Robust Principal Component Analysis (RPCA) model \cite{candes2011robust}. Due to the non-convex nature of rank, the nuclear norm of matrices/tensors \cite{lu2019tensor} is commonly employed as a convex envelope to constrain the rank. 2) the design of iterative optimization algorithms to solve the model, such as the use of Alternating Direction Method of Multipliers (ADMM) \cite{boyd2011distributed}, Proximal Gradient Descent (PGD) \cite{beck2009fast}, and similar algorithms. %During the solving process, the presence of nuclear norm regularization inevitably involves singular value decomposition (SVD).

Specifically, the degradation model of IIPs can be commonly described as follows:
\begin{equation}
	\mathsf{A}(X)+n = b,
\end{equation}
where $\mathsf{A}(\cdot)$ is the degradation operator, $X$ is the to-be-reconstructed variable, $b$ is the observed data, and $n$ is the noise. The goal of IIPs is to recover the unknown variable $X$ from the observed data $b$. % 为了简洁地说明问题，我以矩阵逆问题为例，即X为一个二维矩阵。引入低秩正则化后，重建问题可以描述为：
To succinctly illustrate, we take the example of a matrix inverse problem, where \(X\) is a two-dimensional matrix. With the introduction of low-rank regularization, the reconstruction problem can be described as follows:
\begin{equation}
	\min_{X} \frac{1}{2}\|\mathsf{A}(X)-b\|_{2}^{2}+\lambda\|X\|_{*},
\end{equation}
where $\|\cdot\|_{*}$ denotes the nuclear norm of the matrix, and $\lambda$ is the regularization parameter. Adopting the PGD algorithm \cite{cai2010singular,beck2009fast} to solve the above problem, the iterative update formula is as follows:
\begin{equation}
	\label{pgdintro}
	\begin{cases}
		Z = X^{n}-\rho \mathsf{A}^H\left(\mathsf{A}(X^{n})-b\right) \\
		X^{n+1} = \operatorname{SVT}(Z, \lambda\rho). \\
	\end{cases}
\end{equation}
where the subscript indicates the iteration step, $Z$ is the intermediate variable, $\rho$ is the stepsize of the gradient descent step, $\mathsf{A}^H$ is the adjoint of $\mathsf{A}$, SVT denotes singular value thresholding algorithm \cite{cai2010singular}, and $\operatorname{SVT}(Z, \lambda\rho) = U \max(S-\lambda\rho, 0)V^H$ with the singular value decomposition (SVD) $Z=USV^H$.

% 在求解核范数正则化问题的过程中，对矩阵进行SVD分解并对分解后的奇异值进行软阈值操作是高效简洁的，而且也是在各种应用中广泛采用的方法。在迭代优化算法中，对SVD的计算只考虑其前向过程，而不涉及需要SVD导数的后向过程。因此，SVD的导数不稳定性不会对算法正常运行产生任何影响。
In the process of solving nuclear norm regularization problems, performing SVD on the matrix and applying a soft-thresholding operation to the decomposed singular values is an efficient and concise approach \cite{cai2010singular}. In iterative optimization algorithms, the computation of SVD is only concerned with its forward process, and it does not involve the backward process requiring SVD derivatives. Consequently, the instability in the derivatives of SVD \cite{wang2021robust} does not adversely affect the normal operation of the algorithm.

% 最近，Deep Unrolling Networks (DUNs) 在传统迭代优化算法的两阶段构建的基础上增添了第三个阶段，即， 3）将迭代优化算法的迭代过程展开为一个深度神经网络。具体来说, DUN将迭代优化算法（例如(1)）的固定个（典型的，10）迭代步在深度学习框架中实现，从而构建起一个端到端的神经网络，并进一步能够利用监督学习来对迭代算法中的超参数进行自适应学习。更进一步，DUN还能够将迭代算法中的某些变换用诸如卷积神经网络CNN等来代替进一步提高网络对数据的学习能力。总体来说，DUN能够利用传统算法的低秩正则化物理驱动能力，同时又能够利用深度学习强大的学习能力，从而在各种逆问题中取得了十分优异的效果。
Recently, Deep Unrolling Networks (DUNs) \cite{monga2021algorithm,zhang2023physics} have introduced a third stage on top of the traditional two-stage construction of iterative optimization algorithms. Specifically, DUNs unfold the iterative algorithm, such as \eqref{pgdintro}, into a deep neural network. In this process, a fixed number of (typically 10) iterations of the optimization algorithm are implemented within a deep learning framework. This enables the construction of an end-to-end neural network and facilitates adaptive learning of hyperparameters within the iterative algorithm using supervised learning. Furthermore, DUNs can replace certain transformations within the iterative algorithm with neural networks, enhancing the ability to learn from data. Overall, DUNs leverage the physics-driven capabilities of traditional algorithms while harnessing the powerful learning capabilities of deep learning. As a result, they have demonstrated remarkable performance in various IIPs \cite{liang2019deep,liu2019deep}.

% 鉴于传统低秩正则化模型的广泛应用、DUN网络对迭代算法的自然承接以及DUN取得到显著成功，基于低秩正则化的DUNs必然成为研究热点。然而，在DUN中计算SVD不得不考虑SVD的反向传播问题，而SVD在出现奇异值重复时的导数是不存在的，甚至当两个奇异值非常接近时也会出现导数值过大从而溢出相应数据类型（例如，在pytorch中训练时采用Float32）的情况。这种数值不稳定性使得DUN的训练过程及其容易崩溃，甚至在某些情况下，DUN的训练过程会出现数值不稳定性的循环，从而导致训练过程无法正常进行。因此，如何解决SVD的反向传播问题使能基于低秩正则化DUNs的一个重要研究方向。
Given the widespread application of traditional low-rank regularization models, the natural extension of DUNs to iterative algorithms, and the significant success achieved by DUNs, it is inevitable for low-rank regularization-based DUNs to become a research hotspot \cite{solomon2019deep,zhang2020video,zhang2022t}. However, in DUNs, computing SVD introduces challenges regarding the backpropagation of SVD. The derivative of SVD is undefined when there are repeated singular values \cite{wang2021robust,chen2021efficient}, and it can even result in excessively large derivatives, leading to overflow for a certain training data type (e.g., when using Float32 in PyTorch, the maximum value without overflow is $3.41\times 10^{38}$) when two singular values are very close. This numerical instability makes the training process of DUNs prone to collapse. In some cases, the training process of DUNs can enter a cycle of numerical instability, rendering the training process unable to proceed normally. Therefore, addressing the backpropagation issue of SVD is a crucial step for enabling low-rank regularization-based DUNs.

% 虽然[1]试图解决SVD不可微分的问题，然而其主要面向对称矩阵的SVD，或者说，矩阵的特征值分解。然而，在逆问题中，往往都是利用平凡的SVD来实现奇异值阈值，输入矩阵往往不具有对称性。To the best of our knowaledge, 针对平凡SVD的可微分性分析仍然处于空白。在本文中，我们从SVD的导数的推导出发，发现SVD不可微分的问题本质上归因于推导过程中的一个二元一次方程在重复奇异值情况下欠定的问题，并因此利用Moore-Penrose伪逆给出该方程的极小范数最小二乘解，从而提出一种可微分的SVD。另外，我们也在逆问题情境下给出了该可微分SVD的导数的数值上界。
While the paper \cite{wang2021robust} attempted to address the non-differentiability issue of SVD, it mainly focused on the SVD of positive semi-definite matrices or, in other words, the eigenvalue decomposition of matrices. However, in IIPs, the commonly used SVD is often applied to trivial matrices for SVT. To the best of our knowledge, the differentiability analysis of the trivial SVD remains unexplored. In this paper, we start from the derivation of the derivative of SVD and identify that the non-differentiability issue of SVD is essentially attributed to an underdetermined system of linear equations arising in the derivation process, particularly in the presence of repeated singular values. To overcome this, we propose a differentiable SVD by utilizing the Moore-Penrose pseudoinverse to obtain the minimum norm least-squares solution of the system. Additionally, we provide a Numerical stability analysis for the derivative of this differentiable SVD in the context of IIPs. 

To demonstrate the applicability of our approach, we evaluate our proposed differentiable SVD in two IIPs, i.e., color image compressive sensing and dynamic magnetic resonance imaging (MRI) reconstruction. A state-of-the-art low-rank regularization-based DUN \cite{zhang2020video,zhang2022t} is employed as the baseline model. The experimental results show that our proposed differentiable SVD can effectively address the numerical instability issue of SVD in DUNs, thereby improving the performance.

% Therefore, our main contribution lies in the novel differentiable SVD based on the Moore-Penrose pseudoinverse that can easily be integrated into deep learning frameworks.

% 应用.....contribution？

\section{Related Works}
\subsection{DUNs with Low-Rank Regularization for IIPs}
% DUNs with low-rank regularization与传统迭代优化方法同源，大抵可以分为两类。一类是低秩正则化直接施加到待重建目标X上，另一类是RPCA模型，即，将X分解为低秩部分L和稀疏部分S，低秩正则化施加在X的分离成分上。
DUNs with low-rank regularization have obtained substantial success in a wide of applications \cite{monga2021algorithm,zhang2023physics,liang2019deep,liu2019deep}. Similar to traditional iterative optimization methods, they can generally be classified into two categories. The first, here termed LR-DUNs, involves directly applying low-rank regularization to the target of interest, denoted as $X$, for reconstruction. The second category, termed RPCA-DUNs, decomposes $X$ into a low-rank component $L$ and a sparse component $S$. In this case, low-rank regularization is applied to the separated component $L$.

% LR-DUNs已经在Matrix completion、High Dynamic Range Imaging、Multispectral Illumination Estimation、Hyperspectral Image Denoising、Target Detection、Spectral CT Image Reconstruction、MRI Image Reconstruction等领域取得了非常好的效果。这些方法大多利用矩阵的核范数作为低秩正则化，而对于张量数据来说，Unfold成矩阵往往会破坏其高维低秩结构。Zhang等人在张量奇异值分解的框架下提出一种LR-DUN能够在CNN变换域利用张量低秩特性，并在高维磁共振图像重建任务重取得了SOTA表现。类似的思想也同样在Video Synthesis应用中进行了评估。
% 另外，RPCA-DUNs也在多种应用下取得了非常好的效果，包括但不限于Foreground-Background Separation、SAR imaging、Radar Interference Mitigation、Cloud Removal等。具体地，CORONA、L+S-Net分别在Clutter Suppression in Ultrasound和dynamic MRI reconstruction任务中取得了不小的成功。
LR-DUNs have demonstrated remarkable effectiveness in various fields such as matrix completion \cite{shanmugam2023unrolling}, high dynamic range imaging \cite{mai2022deep}, hyperspectral image denoising \cite{zhao2023hyperspectral}, spectral CT image reconstruction \cite{chen2023soul}, MRI image reconstruction \cite{ke2021learned,zhang2022t}, and more. These methods mostly employ the matrix nuclear norm regularization for low-rank constraints. However, when dealing with tensor data, unfolding it into a matrix often disrupts its high-dimensional low-rank structure. To address this issue, \citet{zhang2022t,zhang2023dynamic} proposed a framework for LR-DUNs based on tensor singular value decomposition. This approach utilizes the low-rank characteristics of tensors in the CNN-transformed domain and achieves state-of-the-art performance in high-dimensional MRI reconstruction tasks. A similar idea has also been evaluated in the context of video synthesis applications \cite{zhang2020video}. Additionally, RPCA-DUNs have also achieved impressive results in various applications, including but not limited to foreground-background separation \cite{van2021deep}, SAR imaging \cite{an2022lrsr}, radar interference mitigation \cite{ristea2021automotive}, and cloud removal \cite{imran2022deep}. Specifically, methods like CORONA \cite{solomon2019deep} and L+S-Net \cite{huang2021deep} have achieved significant improvement in tasks such as ultrasound clutter suppression and dynamic MRI reconstruction.

% 以上提到的方法在利用数据的低秩特性时采用的方式都是SVT。这也凸显了该方法的简洁、高效、易于使用的特点。然而，SVT需要首先对输入数据进行SVD。而SVD的导数在出现重复奇异值时会出现数值不稳定，从而很容易使得DUNs无法训练。因为训练过程中DUN内部的数据随着反向传播在不断发生改变，而训练次数又往往在上万乃至数十万次的量级，因此出现重复奇异值，或者说出现两个非常接近奇异值的概率是相当大的。当然，也存在其他有效的低秩利用的方法，如partial seperation的方法，通过将X分解为UV的方式来实施。这种方法需要手动预先选择秩r的大小，一定程度上限制了算法的表现。当然，这种方法也具有优点，即能够避免SVD的计算。仅仅在此处一提，这已经超出了本文的研究范围。
The methods mentioned above primarily employ SVT to leverage the low-rank priors. This highlights the simplicity, efficiency, and ease of use characteristics of SVT. However, SVT requires performing SVD on the input data, and the derivative of SVD becomes numerically unstable when encountering repeated singular values \cite{wang2021robust}. This instability can pose challenges during the training of DUNs, especially considering that the training process involves frequent updates to the internal data of the DUNs through backpropagation. Given the large number of training iterations, often in the order of tens of thousands or even hundreds of thousands, the probability of encountering repeated or very close singular values is quite high. Also, there are alternative methods for effective low-rank utilization, such as partial separation \cite{liang2007spatiotemporal,chen2021efficient}, which decomposes $X \in \mathbb{C}^{m \times n}$ into $UV$, where $U\in\mathbb{C}^{m\times r}$ and $V\in\mathbb{C}^{r\times n}$. However, this approach necessitates manually selecting the rank $r$ beforehand, which to some extent restricts its performance. 

\subsection{SVD Gradient}
\label{sec:svd_grad}
% Townsend等人细致地分析了SVD的梯度。Ionescu等人给出了对称矩阵SVD的导数，相对于平凡SVD的导数具有相当简化的形式。然而，这些方法都没有考虑到SVD的数值不稳定的问题。
\citet{townsend2016differentiating} conducted a detailed analysis of the gradient of SVD. \citet{ionescu2015matrix} provided the derivative for the SVD of symmetric matrices, which have a simplified form compared to the derivative of the trivial SVD. However, these methods did not consider the issue of numerical instability associated with SVD.

% 针对于对称矩阵SVD的可微分性问题，或者说矩阵特征值分解的可微分性问题，wang等人注意到利用power iteration方法可以通过deflation进程逐步得到矩阵的特征值和特征向量，因此，如果利用PI方法代替严密求解的对称矩阵的SVD，则进而可以在反向传播中利用PI的梯度从而规避SVD的数值不稳定性。不过，前向传播中的不精确求解会影响模型的表现，因此，他们提出在前向传播利用精确SVD求解，而反向传播中利用PI的梯度的方案。进一步地，wang等人发现
In addressing the differentiability issue of SVD for positive semi-definite matrices or, equivalently, the differentiability issue of eigenvalue decomposition for matrices, \citet{wang2019backpropagation} observed that employing the power iteration (PI) method \cite{nakatsukasa2013stable} could gradually yield the eigenvalues and eigenvectors of a matrix through a deflation procedure. Therefore, by substituting the PI method for the exact solution of the SVD for symmetric matrices, they could leverage the gradients of PI in the backward pass to circumvent the numerical instability associated with SVD. However, the imprecise solution in the forward pass could impact the performance of the model. Consequently, they proposed a strategy where accurate SVD is used in the forward pass, while the gradients of PI are utilized in the backward pass. Furthermore, \citet{wang2021robust} discovered the derivative of the SVD of positive semi-definite matrices depends on the matrix $\widetilde{K}$ \cite{ionescu2015matrix} with elements:
\begin{equation}
	\widetilde{K}_{ij}= \begin{cases}
		\frac{1}{\sigma_i-\sigma_j}, & i\neq j \\
		\quad0, & i=j
	\end{cases},
\end{equation}
where $\sigma$ denotes the singular values of the matrix. 
% 从式子(1)可以发现，当两奇异值相等时，K的相应元素将变成无穷大，因此将导致数值不稳定性。wang等人提出一种泰勒展开的方案，来规避K中元素无穷大的问题。当si>sj时，有
It is evident that when two singular values are equal, the corresponding elements of $\widetilde{K}$ will become infinite, leading to numerical instability. \citet{wang2021robust} proposed a solution based on Taylor expansion to address this issue. When $\sigma_i > \sigma_j$, the $K$th degree Taylor expansion of the element $\widetilde{K}_{ij}$ can be expressed as:
\begin{equation}
  \small
  \begin{aligned}
	\widetilde{K}_{ij}&=\frac{1}{\sigma_i-\sigma_j}=\frac1{\sigma_i}\left(\frac1{1-\sigma_j/\sigma_i}\right)\\
	&=\frac1{\sigma_i}\left(1+\left(\frac{\sigma_j}{\sigma_i}\right)^2+\left(\frac{\sigma_j}{\sigma_i}\right)+...+\left(\frac{\sigma_j}{\sigma_i}\right)^K\right),
  \end{aligned}
\end{equation}
where the second equality holds due to the $K$th degree Taylor expansion of function $f(x) = \frac1{1-x}, \quad x\in[0,1)$ at the point $x=0$. At the element where $\sigma_i < \sigma_j$, we have,
\begin{equation}
	\widetilde{K}_{ij}=\frac{1}{\sigma_i-\sigma_j}=-\frac1{\sigma_j}\left(\frac1{1-\sigma_i/\sigma_j}\right),
\end{equation}
and the same $K$th degree expansion could be used.

% 然而这种近似存在两个问题：1）由于函数f(x) = \frac1{1-x}的泰勒级数的收敛半径为$|x|<1$，然而，运用泰勒展开的出发点是为了解决两奇异值相等的情况，即s1/s2=1的情况。在这种情况下，a不能保证收敛到b，从而a的k阶近似也无法有效的逼近b。2）在训练过程中，重复奇异值只是概率事件，并不是在每个训练step中都会出现。然而，这种泰勒近似的方法将改变任何情况下的SVD的梯度，因此可能会导致不精确的反向传播。
However, this approximation method encounters three issues:
1) The Taylor series of the function \(f(x) = \frac{1}{1-x}\) has a convergence radius of \(|x| < 1\). The starting point for applying Taylor expansion is to address the scenario when two singular values are equal, i.e., \(\frac{\sigma_j}{\sigma_i}=1\). In such cases, it cannot be guaranteed that \(\left(1+\left(x\right)^2+\left(x\right)+...+\left(x\right)^N\right), N\rightarrow\infty\) converges to \(\frac{1}{1-x}\), thereby rendering the \(k\)-th order approximation, $\left(1+\left(\frac{\sigma_j}{\sigma_i}\right)^2+\left(\frac{\sigma_j}{\sigma_i}\right)+...+\left(\frac{\sigma_j}{\sigma_i}\right)^K\right)$, ineffective in approximating \(\left(\frac1{1-\sigma_j/\sigma_i}\right)\).
2) During the training process, the repetition of singular values is a probabilistic event and does not occur in every training step. However, this Taylor approximation method alters the gradients of SVD in any scenario, potentially leading to inaccurate backpropagation.
% 实际上，K矩阵并不是导致SVD不可导的内在根本原因，我们将在下一节中具体分析
3) Indeed, the $\widetilde{K}$ matrix is not the intrinsic and fundamental reason for the non-differentiability of SVD. We will delve into a detailed analysis in the next section.

% To the best of our knowaledge, 目前并没有非常完善的对于平凡的SVD的可微性分析的工作。wang等人的基于泰勒展开的工作可以推广到平凡SVD。具体来说，平凡SVD的导数依托于矩阵F with elements
To the best of our knowledge, there is currently no comprehensive analysis of the differentiability of the trivial SVD. The work based on Taylor expansion \cite{wang2021robust} can be extended to the trivial SVD. Specifically, the derivatives of the trivial SVD rely on the matrix \(F\) \cite{townsend2016differentiating} with elements:
\begin{equation}
	\label{eq:intro_F}
	F_{ij}= \begin{cases}
		\frac{1}{\sigma_j^2-\sigma_i^2}, & i\neq j \\
		\quad0, & i=j
	\end{cases}.
\end{equation}
% 因此，我们可以通过类似的K阶泰勒展开的方式近似F矩阵的元素，在本文中我们把这种方案命名为SVD-taylor。当然，梯度裁剪的启发性策略也可以用在矩阵F上，即当出现两相同奇异值时或者两奇异值过分接近时，裁剪F的元素到一个相对较大的值，例如1e16，在本文中，该方案命名为SVD-clip。另外，通过对TensorFlow的代码解析，我们发现当F中出现无穷大值时，TensorFlow总是把该对应元素置零，我们把这种方案命名为SVD-tf。以上的方案均是针对矩阵F做调整，然而，我们发现F的数值不稳定性仅仅是SVD不可微分性的表象，真正的深层原因应当是推导过程中的一个二元一次方程在重复奇异值情况下欠定的问题。因此我们利用Moore-Penrose伪逆给出该方程的极小范数最小二乘解，从而提出一种可微分的SVD。
% SOUL-Net就利用了这种SVD-taylor的策略来解决DUN中利用SVT的数值不稳定性。该文章也是我们发现唯一一篇在DUN中切实考虑SVD不可微分性的文章。
Therefore, we can approximate the elements of the \(F\) matrix through a similar \(K\)-th order Taylor expansion. In this paper, we refer to this approach as SVD-taylor. 
SOUL-Net \cite{chen2023soul} employs this SVD-taylor strategy to address the numerical instability associated with the SVT in DUN. This paper is the only one we have found that effectively considers the non-differentiability of SVD in the context of DUN. Moreover, the inspiration for gradient clipping strategies can also be applied to the matrix \(F\). For instance, when two singular values are equal or very close, elements of \(F\) can be clipped to a relatively large value, such as \(1e16\). In this paper, we name this approach SVD-clip. Additionally, through an analysis of the code of TensorFlow \cite{abadi2016tensorflow}, we found that when \(F\) contains infinite values, TensorFlow always sets the corresponding element to zero. We term this approach SVD-tf.
All of the above methods involve adjustments to the \(F\) matrix. However, we observed that the numerical instability of \(F\) is merely a manifestation of the non-differentiability of SVD. The true underlying cause is an underdetermined system of linear equations in the derivation process when facing repeated singular values. Therefore, we employ the Moore-Penrose pseudoinverse to obtain the minimum norm least squares solution to this system, proposing a differentiable SVD, termed SVD-inv.
% 另外，还有一些工作提出小尺寸的矩阵具有重复奇异值的概率更低，因此计算其SVD的梯度更加稳定。因此，他们提出将原大尺寸的矩阵分组成小尺寸计算。然而，这种方案涉及对原始问题的人工修改干预，而且可能无法正确衡量全局奇异向量，进而导致次优的结果。
Moreover, some works \cite{huang2018decorrelated,kessy2018optimal} suggest that smaller-sized matrices have a lower probability of having repeated singular values, making the computation of their SVD gradients more stable. Consequently, they propose dividing the original large-sized matrix into smaller ones for computation. However, this approach involves manual interventions in the original problem and may not accurately measure the global singular vectors \cite{wang2021robust}, leading to suboptimal results.

\section{Differentiable SVD based on Moore-Penrose Pseudoinverse}
% 在本节中，我们将从SVD导数的推导出发，揭示SVD在重复奇异值出现时不可导的深层次原因，即推导过程中的一个二元一次方程在重复奇异值情况下欠定，进而利用Moore-Penrose伪逆给出该方程的最小二乘解，从而使得SVD可导。
In this section, we start from the derivation of the SVD gradient, revealing the deep-seated reason for the non-differentiability of SVD when repeated singular values occur. Specifically, we highlight that a system of linear equations in the derivation process becomes underdetermined in the case of repeated singular values. Subsequently, we utilize the Moore-Penrose pseudoinverse to provide the least squares solution to this system, thereby rendering SVD differentiable.

\subsection{The gradient of the proposed SVD-inv}
% 假设在深度神经网络中的存在一个SVD操作，并且该网络具有一个loss function f和SVD分解的U,S,V有关。
Assuming there is an SVD operation for the input $A$ in a deep neural network, i.e., $A = USV^H$, and the network has a loss function \(\mathcal{L}(U,S,V)\) that depends on the SVD decomposition \(U, S, V\). From the backpropagation, We can obtain the partial derivatives of \(\mathcal{L}\) with respect to \(U, S, V\), abbreviated as $\overline{U}, \overline{S}, \overline{V}$. The total derivative of $\mathcal{L}$ can be written as,
\begin{equation}
	\label{eq:big}
	d\mathcal{L} = tr(\overline{U}^HdU)+tr(\overline{S}^HdS)+tr(\overline{V}^HdV) = tr\left(\frac{\partial \mathcal{L}}{\partial A}^HdA\right),
\end{equation}
where $tr(\cdot)$ denotes the trace of the input matrix, and the second equality holds due to that the loss function $\mathcal{L}$ can also be written as $\mathcal{L}(A)$. So, to correctly derive the derivative $\frac{\partial \mathcal{L}}{\partial A}$, it is necessary to express \(dU, dS, dV\) in terms of \(dA\), meaning finding their relationships with \(dA\). Furthermore, by dividing both sides of the equation by \(dA\), the remaining terms will constitute the derivative of \(\frac{\partial \mathcal{L}}{\partial A}\).

\begin{theorem}
	\label{thm:1}
	Given the SVD $A=USV^H$ where $A\in\mathbb{C}^{m\times n}$, $U\in\mathbb{C}^{m\times k}$, $V\in\mathbb{C}^{n\times k}$, $S\in\mathbb{R}^{k\times k}$ and $\operatorname{rank}(A)<k<\min(m,n)$, the following relationships hold,
	\begin{align}
		dU &= U(F\odot[U^HdAVS+SV^HdA^HU] \notag\\
	& \quad +T\odot [U^HdAV]) \label{eq:u} \\
    & \quad +(I_m-UU^H)dAVS^{-1}, \notag \\
		dS &= I_k \odot [U^HdAV], \label{eq:s} \\
		dV &= V\left(F\odot[SU^HdAV+V^HdA^HUS]\right) \notag \\
	& \quad +(I_n-VV^H)dAVS^{-1},\label{eq:v}
	\end{align}
	where $I_k$ represents the \(k \times k\) identity matrix, $\odot$ denotes the Hadamard product,
	$F_{ij}=\begin{cases}
		\quad 0, &\sigma_i=\sigma_j\\
		\frac{1}{\sigma_j^2-\sigma_i^2}, &else
	\end{cases}$
	,
	$T_{ij}=\begin{cases}
		\frac{1}{\sigma_i}, &\sigma_i=\sigma_j\neq 0 \& i\neq j\\
		0, &else
	\end{cases}$,
	and
	$S^{-1}=\begin{cases}
		\frac{1}{\sigma}, \sigma\neq 0\\
		0, else
	\end{cases}$.
\end{theorem}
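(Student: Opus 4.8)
The plan is to differentiate the defining relations of the SVD and the orthonormality constraints, then solve the resulting linear system for $dU$, $dS$, $dV$. First I would start from $A = USV^H$ and take the differential, obtaining $dA = dU\,SV^H + U\,dS\,V^H + U S\,dV^H$. To isolate the "diagonal block" information, I would multiply on the left by $U^H$ and on the right by $V$, giving $U^H dA\, V = U^H dU\, S + dS + S\, dV^H V$. Introducing the auxiliary matrices $\Omega_U := U^H dU$ and $\Omega_V := V^H dV$, this reads $U^H dA\,V = \Omega_U S + dS - S\,\Omega_V$ (using $dV^H V = -(V^H dV)^H = -\Omega_V^H$; care with Hermitian transposes is needed here since we are over $\mathbb{C}$). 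The orthonormality conditions $U^H U = I_k$ and $V^H V = I_k$ differentiate to $\Omega_U + \Omega_U^H = 0$ and $\Omega_V + \Omega_V^H = 0$, i.e. both are skew-Hermitian, so in particular their diagonals are purely imaginary — and since $S$ is real, taking the $I_k\odot(\cdot)$ part of the block equation immediately yields \eqref{eq:s}, $dS = I_k\odot[U^H dA\,V]$.

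Next I would extract the off-diagonal parts. From the block equation I get one relation among $\Omega_U$ and $\Omega_V$; a second, independent relation comes from multiplying the differential of $A^H = V S U^H$ (equivalently, forming $V^H dA^H U$) on the appropriate sides, which gives $V^H dA^H U = S\,\Omega_U^H \,(\text{-ish}) + \dots = -\Omega_V^H S + dS - S\,\Omega_U$ after using skew-Hermiticity. The pair of equations, written entrywise for indices $i\neq j$, takes the form of a $2\times 2$ linear system in the unknowns $(\Omega_U)_{ij}$ and $(\Omega_V)_{ij}$ with coefficient matrix depending on $\sigma_i,\sigma_j$; its determinant is $\sigma_j^2 - \sigma_i^2$. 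When $\sigma_i \neq \sigma_j$ the system is invertible and solving it produces exactly the $F$-weighted combinations in \eqref{eq:u} and \eqref{eq:v}, where $F_{ij} = 1/(\sigma_j^2-\sigma_i^2)$. When $\sigma_i = \sigma_j \neq 0$ the system is underdetermined (this is the crux the paper is flagging): the two equations become dependent, leaving one degree of freedom, and the $T$ matrix encodes the particular (minimum-norm / pseudoinverse) choice $(\Omega_U)_{ij} = \frac{1}{\sigma_i}[U^H dA\,V]_{ij}$ with $F_{ij}$ set to $0$. I would verify that this choice is consistent with skew-Hermiticity and with the one surviving equation.

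Finally I would handle the rank-deficient "outside" directions. Because $k > \operatorname{rank}(A)$ and $k < \min(m,n)$, the columns of $U$ and $V$ do not span all of $\mathbb{C}^m$, $\mathbb{C}^n$; the components of $dU$, $dV$ lying in the orthogonal complements $(I_m - UU^H)$ and $(I_n - VV^H)$ are not constrained by $\Omega_U$, $\Omega_V$. I would recover them directly from $dA = dU\,SV^H + U\,dS\,V^H + US\,dV^H$ by multiplying on the right by $V S^{-1}$ (with $S^{-1}$ the pseudoinverse-diagonal, zeroing the zero singular values) and projecting with $(I_m - UU^H)$, which kills the $U(\cdot)$ terms and leaves $(I_m - UU^H)dU = (I_m - UU^H)dA\, V S^{-1}$; the analogous manipulation on $A^H$ gives the $V$ counterpart. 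Assembling $dU = U\Omega_U + (I_m-UU^H)dU$ and $dV = V\Omega_V + (I_n-VV^H)dV$ from the pieces then gives \eqref{eq:u} and \eqref{eq:v}. The main obstacle I anticipate is the bookkeeping in the underdetermined case: carefully setting up the $2\times 2$ entrywise system, identifying precisely which linear combination survives when $\sigma_i=\sigma_j$, and checking that the pseudoinverse solution reproduces the stated $T$ (and the convention $F_{ij}=0$ there) rather than some other valid section — plus the pervasive need to track Hermitian conjugates correctly over $\mathbb{C}$, which is where sign errors tend to creep in.
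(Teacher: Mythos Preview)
Your proposal is correct and follows essentially the same route as the paper: differentiate $A=USV^H$ and the orthonormality constraints, split $U^HdAV$ into diagonal (giving $dS$) and off-diagonal parts, set up the entrywise $2\times2$ system with determinant $\sigma_j^2-\sigma_i^2$, invoke the Moore--Penrose pseudoinverse in the degenerate case $\sigma_i=\sigma_j\neq0$ to obtain the $T$ term, and recover the complementary components via the projector $(I_m-UU^H)$ (the paper equivalently introduces an explicit $U_\perp$ with $U_\perp U_\perp^H=I_m-UU^H$). The only minor points you leave implicit are the separate case $\sigma_i=\sigma_j=0$ (which the paper treats explicitly but which your pseudoinverse convention handles automatically) and the fact that your ``second equation'' $V^HdA^HU$ is exactly the paper's use of the conjugated $(ji)$ entry of $dP=U^HdAV$, so the two derivations of the $2\times2$ system coincide.
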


\begin{proof}
	Taking the total differential of the SVD, we have,
	\begin{equation}
		\label{eq:unitary}
		dA=dUSV^H+UdSV^H+USdV^H.
	\end{equation}
	% 这个式子是我们找到dU，dS，dV和dA之间关系的唯一突破口，然而，这里有三个未知数，仅一个方程是无法得到闭解的。我们注意到...
	This equation is the only breakthrough for us to find the relationship between \(dU, dS, dV,\) and \(dA\). However, there are three unknowns here, and only one equation is insufficient to obtain a closed-form solution. We notice that both \(U\) and \(V\) satisfy the following relation,
	\begin{equation}
		U^HU=V^HV=I_k.
	\end{equation}
	% 至此，我们已经构建起一个三元一次方程组。
	% 
	Taking the differential of the above equation gives,
	\begin{equation}
		\label{eq:dudvpre}
		dU^HU+U^HdU=dV^HV+V^HdV=0.
	\end{equation}
	Thus, the matrices $d\Omega_U=U^HdU$ and $d\Omega_V=V^HdV$ are skew-symmetric. If we fix an $m\times (m-k)$ matrix $U_\perp$ such that $[U, U_\perp]$ is an unitary matrix, then we may expand $dU$ as,
	\begin{equation}
		\label{eq:du}
		dU=Ud\Omega_U+U_\perp dK_U,
	\end{equation}
	where $dK_U$ is an unconstrained $(m-k)\times k$ matrix. Similarly, we could obtain,
	\begin{equation}
		\label{eq:dv}
		dV=Vd\Omega_V+V_\perp dK_V,
	\end{equation} 
	where $dK_V$ is an unconstrained $(n-k)\times k$ matrix.
	Substituting \eqref{eq:du} and \eqref{eq:dv} into \eqref{eq:dudvpre}, we can easily verify the correctness of both equations. At this point, we have established a well-posed system of three linear equations, i.e., \eqref{eq:unitary}, \eqref{eq:du}, and \eqref{eq:dv}. 
	
	Left-multiplying \eqref{eq:unitary} by $U^H$ and right-multiplying by $V$ gives,
	\begin{equation}
		\label{eq:solve1}
		U^HdAV=d\Omega_US+dS+Sd\Omega_V^H.
	\end{equation}
	Since $d\Omega_U$ and $d\Omega_V$ are skew-symmetric, they have zero diagonal and thus the products $d\Omega_US$ and $Sd\Omega_V^H$ also have zero diagonal. Therefore, we could split \eqref{eq:solve1} into two components: diagonal and off-diagonal. 
	
	Letting $dP:=U^HdAV$ and using $\odot$ as the Hadamard product, the diagonal of \eqref{eq:solve1} is,
	\begin{equation}
		dS=I_k \odot dP=I_k \odot U^HdAV.
	\end{equation}
	Thus, Equation \eqref{eq:s} holds.

	The off-diagonal is,
	\begin{equation}
		\label{eq:solve2}
		\bar{I}_k \odot dP = d\Omega_US-Sd\Omega_V,
	\end{equation}
	where $\bar{I}_k$ denotes the $k\times k$ matrix with zero diagonal and ones else.

	The elemental form of \eqref{eq:solve2} can be written as, 
	\begin{equation}
		\label{eq:solve31}
		(dP)_{ij} = (d\Omega_U)_{ij}\sigma_j-\sigma_i(d\Omega_V)_{ij}, \quad i>j,
	\end{equation}
	and, the corresponding $ji$th element has the following relationship,
	\small{\begin{equation}
		\label{eq:solve32}
		(dP)_{ji}^* = (d\Omega_U)_{ji}^*\sigma_i-\sigma_j(d\Omega_V)_{ji}^*=-(d\Omega_U)_{ij}\sigma_i+\sigma_j(d\Omega_V)_{ij},
	\end{equation}}%
	where the subscript $*$ denotes the conjugate operation.
	% 从(20)(21)可以发现，由于dU和dV是反对称的，因此只需要解出du和dv一半区域的元素值即可（这里我们选取下三角区域），而对于第ij个元素，有且仅有dP的第ij和ji个元素有关，因此(20)(21)构成一个一元二次方程组。
	From \eqref{eq:solve31} and \eqref{eq:solve32}, we can observe that, due to the skew symmetry of \(d\Omega_U\) and \(d\Omega_V\), it is only necessary to solve for the elements in half of the region of \(d\Omega_U\) and \(d\Omega_V\) (here we choose the lower triangular region). For the element at the \(ij\)-th position, only the \(ij\)-th and \(ji\)-th elements of \(dP\) are relevant. Thus, \eqref{eq:solve31} and \eqref{eq:solve32} form a system of linear equations in two variables, 
	\begin{equation}
		\label{eq:sl}
		\begin{bmatrix}
			\sigma_j &-\sigma_i \\
			-\sigma_i &\sigma_j
		\end{bmatrix}
		\begin{bmatrix}
			(d\Omega_U)_{ij}\\
			(d\Omega_V)_{ij}
		\end{bmatrix}
		=
		\begin{bmatrix}
			(dP)_{ij}\\
			(dP)_{ji}^*
		\end{bmatrix}.
	\end{equation}
	1) When $\sigma_i \neq \sigma_j$, the matrix 
	$\begin{bmatrix}
		\sigma_j &-\sigma_i \\
		-\sigma_i &\sigma_j
	\end{bmatrix}$
	is of full rank, thus we could obtain the closed solution by directly left-multiplying its inverse, i.e,
	\begin{equation}
		\label{eq:neq}
		\begin{aligned}
		\begin{bmatrix}
			(d\Omega_U)_{ij}\\
			(d\Omega_V)_{ij}
		\end{bmatrix}
		&=
		\begin{bmatrix}
			\sigma_j &-\sigma_i \\
			-\sigma_i &\sigma_j
		\end{bmatrix}^{-1}
		\begin{bmatrix}
			(dP)_{ij}\\
			(dP)_{ji}^*
		\end{bmatrix} \\
		&=
		\frac{1}{\sigma_j^2-\sigma_i^2}
		\begin{bmatrix}
			\sigma_j &\sigma_i \\
			\sigma_i &\sigma_j
		\end{bmatrix}
		\begin{bmatrix}
			(dP)_{ij}\\
			(dP)_{ji}^*
		\end{bmatrix}.
	\end{aligned}
	\end{equation}
	2) When $\sigma_i = \sigma_j = 0$, we have $dP_{ij}=dP_{ji}=0$. At these points, \(d\Omega_U\) and \(d\Omega_V\) are independent of \(dP\), or equivalently, \(dA\). Therefore, we can also straightforwardly set them to zero, resulting in a form similar to \eqref{eq:neq}:
	\begin{equation}
		\label{eq:eq=0}
		\begin{bmatrix}
			(d\Omega_U)_{ij}\\
			(d\Omega_V)_{ij}
		\end{bmatrix}
		=
		0
		\begin{bmatrix}
			\sigma_j &\sigma_i \\
			\sigma_i &\sigma_j
		\end{bmatrix}
		\begin{bmatrix}
			(dP)_{ij}\\
			(dP)_{ji}^*
		\end{bmatrix}.
	\end{equation}
	3) When $\sigma_i = \sigma_j = \sigma \neq 0$, the coefficient matrix turns to be
	$\begin{bmatrix}
		\sigma &-\sigma \\
		-\sigma &\sigma
	\end{bmatrix}$, which is a singular matrix, hence non-invertible. Thus, we have revealed that the fundamental reason for the non-differentiability of SVD when repeated singular values occur lies in the fact that the system of linear equations \eqref{eq:sl} does not have an exact solution. In this paper, we propose to use the Moore-Penrose pseudoinverse to obtain the minimum norm least-squares solution, i.e.,%
	\begin{equation}
		\label{eq:eq}
		\begin{bmatrix}
			(d\Omega_U)_{ij}\\
			(d\Omega_V)_{ij}
		\end{bmatrix}
		=
		\begin{bmatrix}
			\sigma &-\sigma \\
			-\sigma &\sigma
		\end{bmatrix}^{\dagger}
		\begin{bmatrix}
			(dP)_{ij}\\
			(dP)_{ji}^*
		\end{bmatrix}
		=
		\begin{bmatrix}
			\frac{1}{2\sigma} &-\frac{1}{2\sigma} \\
			0 &0
		\end{bmatrix}
		\begin{bmatrix}
			(dP)_{ij}\\
			(dP)_{ji}^*
		\end{bmatrix}.
	\end{equation}
	Moreover, substituting $\sigma_i$ and $\sigma_j$ with $\sigma$ in \eqref{eq:solve31} and \eqref{eq:solve32}, we have, $(dP)_{ji}^*=-(dP)_{ij}$, i.e., at this point, \(dP\) also exhibits the antisymmetric property, which makes the two linear equations equivalent, thereby leading to the absence of an exact solution. Substituting this relationship into \eqref{eq:eq}, we ultimately obtain the approximate solution in the case of two equal singular values, that is,
	\begin{align}
		\label{eq:eq2}
		(d\Omega_U)_{ij}&=\frac{(dP)_{ij}}{\sigma},\\
		(d\Omega_V)_{ij}&=0,
	\end{align}

	Therefore, using the equations \eqref{eq:neq}, \eqref{eq:eq=0}, and \eqref{eq:eq2}, we rewrite these relationships in matrix form as follows:
	\begin{align}
		d\Omega_U &= F \odot \left[dPS+SdP^H\right] + T \odot dP, \label{eq:OU} \\
		d\Omega_V &= F \odot \left[SdP+dP^HS\right],\label{eq:OV}
	\end{align}
	where 
	$F_{ij}=\begin{cases}
		\quad 0, &\sigma_i=\sigma_j\\
		\frac{1}{\sigma_j^2-\sigma_i^2}, &else
	\end{cases}$
	, and 
	$T_{ij}=\begin{cases}
		\frac{1}{\sigma_i}, &\sigma_i=\sigma_j\neq 0 \& i\neq j\\
		0, &else
	\end{cases}$.

	Finally, finding $dK_U$ and $dK_V$ in \eqref{eq:du} and \eqref{eq:dv}, we will obtain \eqref{eq:u} and \eqref{eq:v}. Left-multiplying \eqref{eq:unitary} and \eqref{eq:du} by $U_\perp^H$, we could find,
	\begin{equation}
		U_\perp^HdA=dK_USV^H,
	\end{equation}
	and thus,
	\begin{equation}
		U_\perp^HdAV=dK_US.
	\end{equation}
	The diagonal matrix $S$ in our setting may have zero singular values in the last diagonal elements, which means that the corresponding last columns of $dK_US$ are all zeros and thus $U_\perp^HdAV$ is the same. It is indicated that there are no constraints on the last columns of $dK_U$ w.r.t. the zero singular values in $S$, since we have mentioned before in \eqref{eq:du} that $dK_U$ is an unconstrained matrix.  Therefore, we set the unconstrained elements in $dK_U$ as zeros, and then we could obtain,
	\begin{equation}
		\label{eq:KU}
		dK_U=U_\perp^HdAVS^{-1},
	\end{equation}
	where $S^{-1}=\begin{cases}
		\frac{1}{\sigma}, \sigma\neq 0\\
		0, else
	\end{cases}$ and similarly,
	\begin{equation}
		dK_V=V_\perp^HdA^HUS^{-1}.
	\end{equation}

	Substituting \eqref{eq:OU} and \eqref{eq:KU} into \eqref{eq:du} and using the property $U_\perp U_\perp^H=I_m-UU^H$, \eqref{eq:u} holds. Similarly, we can also obtain that \eqref{eq:v} holds.
\end{proof}
\begin{theorem}
	Suppose that at some stage during the computation of the loss function $\mathcal{L}$, we take a matrix $A$ and compute its SVD, i.e., $A=USV^H$. The derivative of $\mathcal{L}$ with respect to $A$ is, 
	\begin{equation}
		\label{eq:lastgradient}
		\begin{aligned}
			\frac{\partial \mathcal{L}}{\partial A}=&U\left[\left(F \odot [U^H\overline{U}-\overline{U}^HU]\right)S+T\odot (U^H\overline{U})\right]V^H \\
			&+(I_m-UU^H)\overline{U}S^{-1}V^H \\
			&+U(I_k \odot \overline{S})V^H\\
      &+ US\left(F \odot [V^H\overline{V}-\overline{V}^HV]\right)V^H \\
	  &+US^{-1}\overline{V}^H(I_n-VV^H)
		\end{aligned}
	\end{equation}
\end{theorem}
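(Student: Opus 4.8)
The plan is to substitute the three differentials from Theorem~\ref{thm:1} into the identity \eqref{eq:big}, $d\mathcal{L} = tr(\overline{U}^H dU)+tr(\overline{S}^H dS)+tr(\overline{V}^H dV)$, and then rewrite every resulting trace in the canonical form $tr(M^H dA)$; the matrix $\frac{\partial\mathcal{L}}{\partial A}$ is then simply the sum of the $M$'s. This is a bookkeeping computation that leans on three trace tools, used over and over: the cyclic property $tr(XY)=tr(YX)$; the conjugate identity $tr(X^H)=\overline{tr(X)}$ together with $tr(X\,dA^H)=\overline{tr(X^H dA)}$ and the fact that $d\mathcal{L}\in\mathbb{R}$, which together let us fold every trace containing $dA^H$ into one containing only $dA$; and the Hadamard--trace swap $tr\big(X^H(F\odot Y)\big)=tr\big((F\odot X)^H Y\big)$, which holds because $F$ is entrywise real, in combination with $(F\odot Y)^H=F^{T}\odot Y^H$ and the structural identities $F^{T}=-F$, $T^{T}=T$, $I_k^{T}=I_k$.

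Next I would dispatch the blocks one at a time. The $dS$ block is immediate from \eqref{eq:s}: $tr(\overline{S}^H dS)=tr\big(\overline{S}^H(I_k\odot U^HdAV)\big)=tr\big((I_k\odot\overline{S})^H U^HdAV\big)=tr\big((U(I_k\odot\overline{S})V^H)^H dA\big)$, which is the third line of \eqref{eq:lastgradient}. For $tr(\overline{U}^H dU)$ I would substitute \eqref{eq:u}, which splits $dU$ into three pieces: the $F$-piece $U(F\odot[U^HdAVS+SV^HdA^HU])$, the $T$-piece $U(T\odot[U^HdAV])$, and the range-complement piece $(I_m-UU^H)dAVS^{-1}$. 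Applying cyclicity and the Hadamard swap to the $F$-piece produces one trace linear in $dA$ and one linear in $dA^H$; folding the latter into a $dA$-trace makes the companion $F$ pick up a transpose, so $F^{T}=-F$ merges the pair into $tr\big((U[(F\odot[U^H\overline{U}-\overline{U}^HU])S]V^H)^H dA\big)$, i.e. the first summand of line~1. The $T$-piece involves only $U$-side indices, has no $dA^H$ companion, and contributes $tr\big((U[T\odot(U^H\overline{U})]V^H)^H dA\big)$, the $T$-summand of line~1. The range-complement piece, using $(I_m-UU^H)^H=I_m-UU^H$ and the convention that $S^{-1}$ has zero entries at zero singular values, gives line~2, $(I_m-UU^H)\overline{U}S^{-1}V^H$. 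Treating $tr(\overline{V}^H dV)$ with \eqref{eq:v} in exactly the same way yields lines~4 and~5 (note that \eqref{eq:v} has no $T$-term, which is why no $T$ appears there). Summing the five contributions gives \eqref{eq:lastgradient}.

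The main obstacle is precisely this conjugate-transpose bookkeeping, rather than any conceptual difficulty. One must be scrupulous about (i) which traces carry $dA$ and which carry $dA^H$, and that after folding $dA^H$-traces into $dA$-traces the Hadamard factor $F$ acquires a transpose and hence a sign, so that the two $F$-contributions inside $dU$ (resp.\ $dV$) recombine into the skew-Hermitian bracket $U^H\overline{U}-\overline{U}^HU$ (resp.\ $V^H\overline{V}-\overline{V}^HV$) instead of cancelling or doubling; (ii) that the $S^{-1}$ factors attached to the range-complement pieces cause no trouble when $S$ is singular, which is exactly the unconstrained-column observation already exploited in the proof of Theorem~\ref{thm:1}; and (iii) that $\overline{U},\overline{S},\overline{V}$ carry no structural constraints of their own, so nothing is silently discarded when they take the place of $dU,dS,dV$. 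Once these points are tracked carefully, the five terms assemble into \eqref{eq:lastgradient}.
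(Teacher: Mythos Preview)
Your proposal is correct and follows precisely the route the paper takes: substitute the differential relations of Theorem~\ref{thm:1} into \eqref{eq:big} and manipulate traces (cyclicity, the Hadamard swap, and the $dA^H\!\to dA$ fold) until everything is of the form $tr(M^H dA)$. The paper's own proof is in fact much terser than yours---it simply invokes trace cyclicity and refers the reader to Townsend~\cite{townsend2016differentiating} for the line-by-line bookkeeping---so your write-up is a faithful, more explicit expansion of the same argument.
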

\begin{proof}
	Substituting Theorem \ref{thm:1} into \eqref{eq:big} and using the property of the matrix trace, $tr(ABC) = tr(CAB) = tr(BCA)$, we could easily obtain \eqref{eq:lastgradient}. Since the derivation is similar to that in the paper \cite{townsend2016differentiating}, the reader can refer to Section 1.1 in Ref.\cite{townsend2016differentiating} for the detailed proof of this theorem.
\end{proof}

\subsection{Numerical stability analysis in the context of IIPs}
In the context of IIPs where SVT is adopted to utilize the low-rank properties, suppose that at some stage during the computation of the loss function $\mathcal{L}$, we take a matrix $A=USV^H$ and conduct SVT on it to obtain $B=U\hat{S}V^H=U\max(S-\tau,0)V^H$. In general, most of the information in a matrix is retained in the large singular values, while small singular values are thresholded. Therefore, $\tau$ is typically not set too small; here, we choose its value as $\tau > 1e^{-10}$.

\begin{lemma}
	In this specific situation, the partial derivative, $\overline{U}$, have the following form,
	\begin{equation}
		\label{eq:ug}
		\overline{U}=\overline{B}V\hat{S}
	\end{equation}
\end{lemma}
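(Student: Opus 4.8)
The plan is to treat the singular value thresholding step $B=U\hat S V^H$, with $\hat S=\max(S-\tau,0)$, as a bilinear-type map in its three arguments $U$, $S$, $V$, and to read off $\overline U$ by matching differentials exactly as was done for the SVD itself in \eqref{eq:big}. The only property of $\hat S$ I will need here is that it is a real diagonal matrix, so $\hat S^H=\hat S$; the precise form of the thresholding and the assumption $\tau>1e^{-10}$ play no role in this lemma and enter only in the subsequent magnitude estimates.

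First I would note that at this node of the computational graph $\mathcal L$ depends on $U$ only through $B$, while $S$ and $V$ are held fixed when forming the partial derivative $\overline U=\partial\mathcal L/\partial U$. So I take the total differential of $B=U\hat S V^H$ with $dS=0$ and $dV=0$, obtaining $dB=dU\,\hat S V^H$, and substitute it into $d\mathcal L=tr(\overline B^{H}dB)$ to get $d\mathcal L=tr(\overline B^{H}dU\,\hat S V^H)$. Pushing the factors around the trace with the cyclic identity $tr(XYZ)=tr(ZXY)$ and using $\hat S^H=\hat S$, this becomes $d\mathcal L=tr\bigl(\hat S V^H\overline B^{H}dU\bigr)=tr\bigl((\overline B V\hat S)^{H}dU\bigr)$. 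Comparing with the defining relation $d\mathcal L=tr(\overline U^{H}dU)$ (with $S,V$ frozen) yields $\overline U=\overline B V\hat S$, which is \eqref{eq:ug}. The same manipulation applied to the $dS$ and $dV$ terms would give $\overline{\hat S}=U^H\overline B V$ and $\overline V=\overline B^H U\hat S$, but only $\overline U$ is needed here.

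There is no real computational obstacle; the one point requiring care is bookkeeping about what is held fixed. The gradients $\overline U,\overline S,\overline V$ are exactly the quantities that feed back into Theorem \ref{thm:1}, so they must be computed for the map $(U,S,V)\mapsto B$ with the three arguments regarded as independent — the dependence $\hat S=\hat S(S)$ is used only when converting $\overline{\hat S}$ into $\overline S$, and the orthonormality of $U$ (and $V$) is accounted for downstream by Theorem \ref{thm:1}, not here. In the complex setting one should also note, consistent with the convention in \eqref{eq:big}, that matching the coefficient of $dU$ in an expression of the form $tr(M^{H}dU)$ pins down $M$ uniquely, which is what licenses reading $\overline U$ off the final trace.
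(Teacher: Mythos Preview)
Your argument is correct. The paper's proof is a one-line remark that the result follows from ``the chain rule and property of the Kronecker product, i.e., $(B^T\otimes A)\operatorname{vec}(X)=\operatorname{vec}(AXB)$,'' i.e.\ it vectorizes $B=U\hat S V^H$, reads off the Jacobian $\partial\operatorname{vec}(B)/\partial\operatorname{vec}(U)$ as a Kronecker factor, and applies the chain rule. Your route---differentiate, insert into $d\mathcal L=tr(\overline B^{H}dB)$, cycle the trace, and match against $tr(\overline U^{H}dU)$---is the standard dual formulation of the same computation and is in fact more consistent with the differential/trace convention the paper itself adopts in \eqref{eq:big}. Both approaches are one-step applications of the chain rule for a bilinear map; neither buys anything the other does not, and your additional remarks about what is held fixed and how the orthonormality of $U$ is deferred to Theorem~\ref{thm:1} are accurate and helpful context that the paper omits.
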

\begin{proof}
	% 其可以通过链式法则和克罗内克积的性质很容易的得到。
	It can be easily obtained using the chain rule and property of the Kronecker product, i.e., $(B^T\otimes A)\operatorname{vec}({X})=\operatorname{vec}({AXB})$.
\end{proof}

Substituting \eqref{eq:ug} into \eqref{eq:lastgradient}, we can obtain,
\begin{equation}
	\label{eq:stableF}
	F \odot [U^H\overline{U}-\overline{U}^HU] = F \odot [U^H\overline{B}V\hat{S}-\hat{S}V^H\overline{B}^HU],
\end{equation}%
\vspace{-0.7cm}
\begin{equation}
	T\odot (U^H\overline{U}) = T\odot (U^H\overline{B}V\hat{S}).\label{eq:stableT}
\end{equation}
% 假设k*k矩阵\hat{S}的对角线为0的元素数量为d，我们把在\hat{S}中的非零的奇异值称为大奇异值，而已经被threshold掉的奇异值称为小奇异值。因此，我们可以矩阵F分为四个部分来讨论其数值稳定性，如图1所示。
Assuming the number of zero elements on the diagonal of the \(k \times k\) matrix \(\hat{S}\) is denoted as \(d\), we refer to the non-zero singular values in \(\hat{S}\) as large singular values, while the singular values that have been thresholded are referred to as small singular values. Therefore, we can discuss the numerical stability of \eqref{eq:stableF} and \eqref{eq:stableT} by dividing the \(k \times k\) size into four parts, as shown in Figure \ref{fig:split}.

\begin{figure}[h]
	\centering
	\includegraphics[scale=0.3]{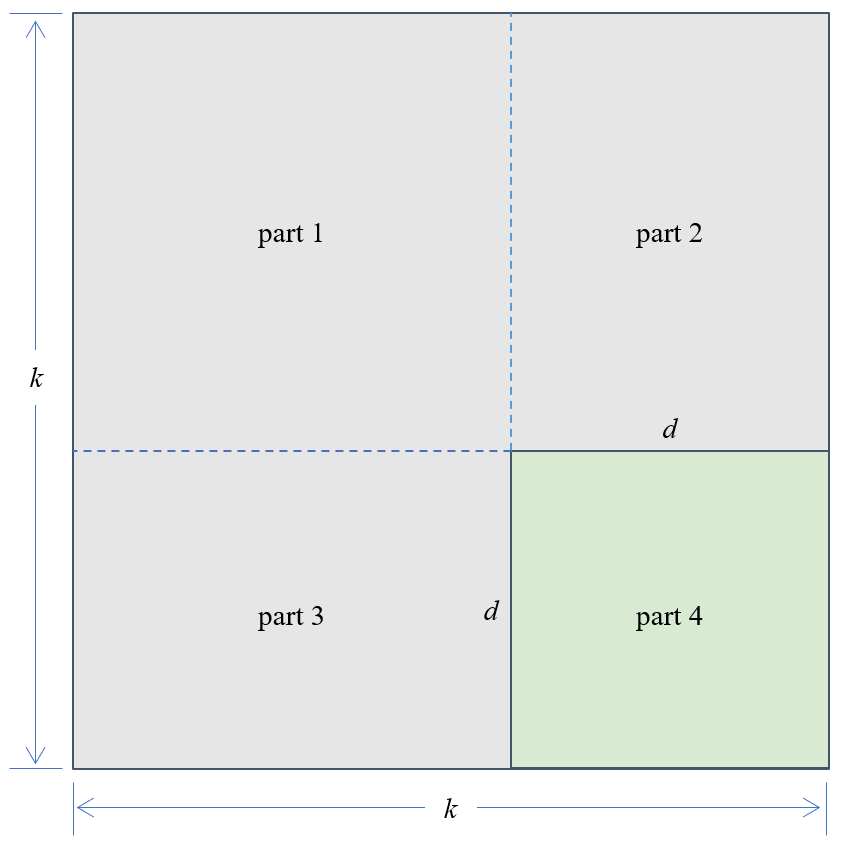}
	\caption[short]{The four split for \(k \times k\) size.}
	\label{fig:split}
\end{figure}

Part 1 corresponds to two large singular values, and the only risk of numerical instability for \(F\) in this scenario is when the two singular values are equal or close. In the actual implementation, a threshold $t$ can be set. For example, set $t$ to \(1e^{30}\) for the Float32 data type and leave some margin with the maximum value of \(3.41 e^{38}\). It should be noted that the value of $t$ can also be directly set to the maximum value. For \(|F_{ij}|>t\), treating the two singular values as equal, the corresponding elements in \(F\) are set to zero, and these elements are transferred to \(T\). Meanwhile, the elements in \(T\) are set to \(\frac{1}{\sigma}\), where \(\sigma\), as a large singular value, should be greater than the threshold \(\tau\) of SVT. Therefore, the elements in \(T\) are less than \(1e^{10}\), ensuring numerical stability.

Parts 2 and 3 correspond to one large and one small singular value. In this case, the two singular values have a significant difference, so there is no involvement of elements in \(T\) as they all remain zeros. The elements in \(F\) are dominated by the large singular value in the denominator, i.e.,
\begin{equation}
	|F_{ij}|=\frac1{\left|\sigma_j^2-\sigma_j^2\right|}=\frac1{\sigma_{large}^2-\sigma_{small}^2} \approx \frac1{\sigma_{large}^2} < 1e^{20},
\end{equation}
ensuring numerical stability.

Part 4 corresponds to two small singular values. Similar to the analysis in Part 1, a threshold $t$ for \(|F_{ij}|\) is set. Elements exceeding this threshold are transferred to \(T\). In this case, due to the small singular values, there is a risk of overflow for the elements \(\frac{1}{\sigma}\) in \(T\). However, it should be noted that the elements of the matrix $[U^H\overline{B}V\hat{S}-\hat{S}V^H\overline{B}^HU]$ in \eqref{eq:stableF} are zeros in part 4, while the elements of the matrix $(U^H\overline{B}V\hat{S})$ in \eqref{eq:stableT} are zeros in both part 2 and 4. This is because the last $d$ elements in the diagonal of $\hat{S}$ are zeros. Therefore, as long as the elements in \(F\) and \(T\) are finite, they will not have any impact on the gradients since they are multiplied by zero. Therefore, we can set all overflow values in \(T\) (in PyTorch, represented as inf) to the maximum value of Float32 to ensure they are finite and thereby maintain numerical stability.

To sum up, we only need to set a sufficiently large threshold \(t\) for \(|F|\), transfer elements exceeding this threshold to \(T\), and set overflow values in \(T\) to the maximum value of Float32. This approach significantly ensures the numerical stability of the SVD-inv derivatives.

% part 1对应两个大奇异值，此时F数值不稳定的唯一风险项是两奇异值相等或接近的情况。在具体实现中，我们可以设置一个阈值。例如，Float32数据类型下设置阈值为1e^{30}，与最大值3.41 × 10^{38}留有一定的余量。对于F中元素绝对值超出阈值的部分，把两奇异值视作相等，将相应的F中的元素置零，而将其转移到T当中。同时，T中的元素为\frac{1}\sigma，而\sigma作为大奇异值应当大于SVT的阈值\tau，因此T中的元素小于1e^{10}，数值稳定。
% part 2和3对应一大一小两个奇异值，此时两奇异值差距较大，因此不涉及T中的元素均为0，而F中的元素的分母部分将被大奇异值支配，因此有|F|<1e^{20}，数值稳定
% part 4对应两小奇异值，与part 1的分析类似，设置F元素绝对值的阈值，即1e^{30}，超出阈值的元素将转移到T中。此时，由于奇异值较小，T中的元素\frac{1}\sigma也有溢出的风险。同时，我们注意到....，因此，只要F和T中的元素是有限值，那么伴随着乘零操作将不会对梯度造成任何影响。所以，我们可以将T中所有溢出的值(在Pytorch中为inf)设置为Float32的最大值以保证其为有限值，进而保证数值稳定。
% 综上，我们只需要为|F|设置一个较大的阈值t，对于超出阈值的元素转移到T中，并且对T中的溢出值设置为Float32的最大值，即能很大程度上保证SVD-inv导数的数值稳定性。

\subsection{SVD-inv for positive semi-definite matrices}
% 半正定矩阵的SVD在ZCA白化中有着广泛的应用，用来去除数据中的冗余信息。实际上，在这种特殊情况下，SVD用来替代特征值分解。在论文中，[1]所提出的SVD-taylor也是主要面向该情况。因此，我们在这里也对SVD-inv在半正定矩阵上的应用进行了探讨。
The SVD of positive semi-definite matrices finds widespread application in ZCA whitening \cite{kessy2018optimal,huang2018decorrelated}, style transfer \cite{cho2019image}, and second-order pooling \cite{ionescu2015matrix}. In essence, in this particular scenario, SVD serves as a substitute for eigenvalue decomposition. The SVD-taylor method proposed in \cite{wang2021robust} is primarily tailored to this context as well. Hence, we also delve into the application of SVD-inverse on positive semi-definite matrices.

% 我们知道，半正定矩阵A的奇异值分解具有A=USU^H的特殊形式，即U=V。因此，在式(20)中，有dU=dV。
It is known that the SVD of a positive semi-definite matrix \( A \) takes on a special form \( A = USU^H \), where \( U = V \). Consequently, in Equation \eqref{eq:solve31}, we have \( d\Omega_U = d\Omega_V \). When the two singular values $\sigma_i$ and $\sigma_j$ are equal, $dP_{ij}=dP_{ji}=0$ holds. Therefore, as the same as \eqref{eq:eq=0}, $ d\Omega_U$ and $d\Omega_V $ are independent of $dP$ and $F$ could be set to zero at the corresponding points, which is consistent with our analysis. Meanwhile, although the elements in $T$ are not zeros, they are multiplied by zero ($dP_{ij}$) in the final gradient, so they will not affect the gradient. Therefore, the SVD-inv is also applicable to positive semi-definite matrices.

\section{Experiments}
\subsection{Efficacy of SVD-inv}
% 为了论证我们所提出的SVD-inv的有效性，我们设计了一个嵌入SVD的前向传播过程，然后通过反向传播计算SVD的梯度。具体地，我们利用Float64双精度浮点数构建了一个矩阵A作为前向传播的输入矩阵，如图1.a所示，其大小为10*10. A是由U0, S0, V0三个矩阵相乘实现的，其中U0和V0均为单位阵，以保证我们设置的对角矩阵S0对角线上的元素为A矩阵的奇异值。这使得我们能够赋予矩阵A（以任意距离）接近的两个奇异值。然后，对A进行SVD得到U,S,V矩阵。值得注意的是，S矩阵与S0并不相等，因为S0对角线上的元素是杂乱无序的，然而奇异值分解得到的S的对角线是从大到小排列的。同理，U，V矩阵也不等于U0，V0。接着，三种不同的工作流被考察，如图1.b-d所示。在workflow1中，我们不对奇异值矩阵S做任何操作，workflow2和3分别对应于对奇异值进行硬阈值和软阈值操作，且被置零的奇异值仅为最后两个。经过处理后的S表示为hatS。最后，我们利用UhatSV^H的L1范数作为损失函数，如图1.e所示。
To demonstrate the effectiveness of our proposed SVD-inv, we designed a forward propagation process embedding SVD, followed by backpropagation to compute the SVD gradient. Specifically, we constructed a matrix \( A \) using Float64 double precision as the input matrix for forward propagation, as shown in Fig.\ref{fig:workflow}.a. This matrix is of size \( 10 \times 10 \). Matrix \( A \) is generated by multiplying three matrices \( U_0 \), \( S_0 \), and \( V_0 \), where both \( U_0 \) and \( V_0 \) are identity matrices, ensuring that the diagonal elements of the diagonal matrix \( S_0 \) are the singular values of \( A \). This setup allows us to assign two singular values of matrix \( A \) that are close to each other by any desired margin. Then, we perform SVD on \( A \) to obtain matrices \( U \), \( S \), and \( V \). It is worth noting that matrix \( S \) is not equal to \( S_0 \) because the diagonal elements of \( S_0 \) are unordered, while the singular values in the diagonal of \( S \) are arranged in descending order. Similarly, \( U \) and \( V \) are not equal to \( U_0 \) and \( V_0 \). Next, three different workflows were examined, as shown in Fig.\ref{fig:workflow}.c-e. In Workflow 1, we do not perform any operations on the singular value matrix \( S \). Workflows 2 and 3 correspond to hard thresholding and soft thresholding of the singular values, respectively, with only the last two singular values set to zero. The processed singular value matrix is denoted as \( \hat{S} \). Finally, we use the \( L_1 \) norm of \( U\hat{S}V^H \) as the loss function.

\begin{figure*}[htbp]
	\centering
	\includegraphics[scale=1]{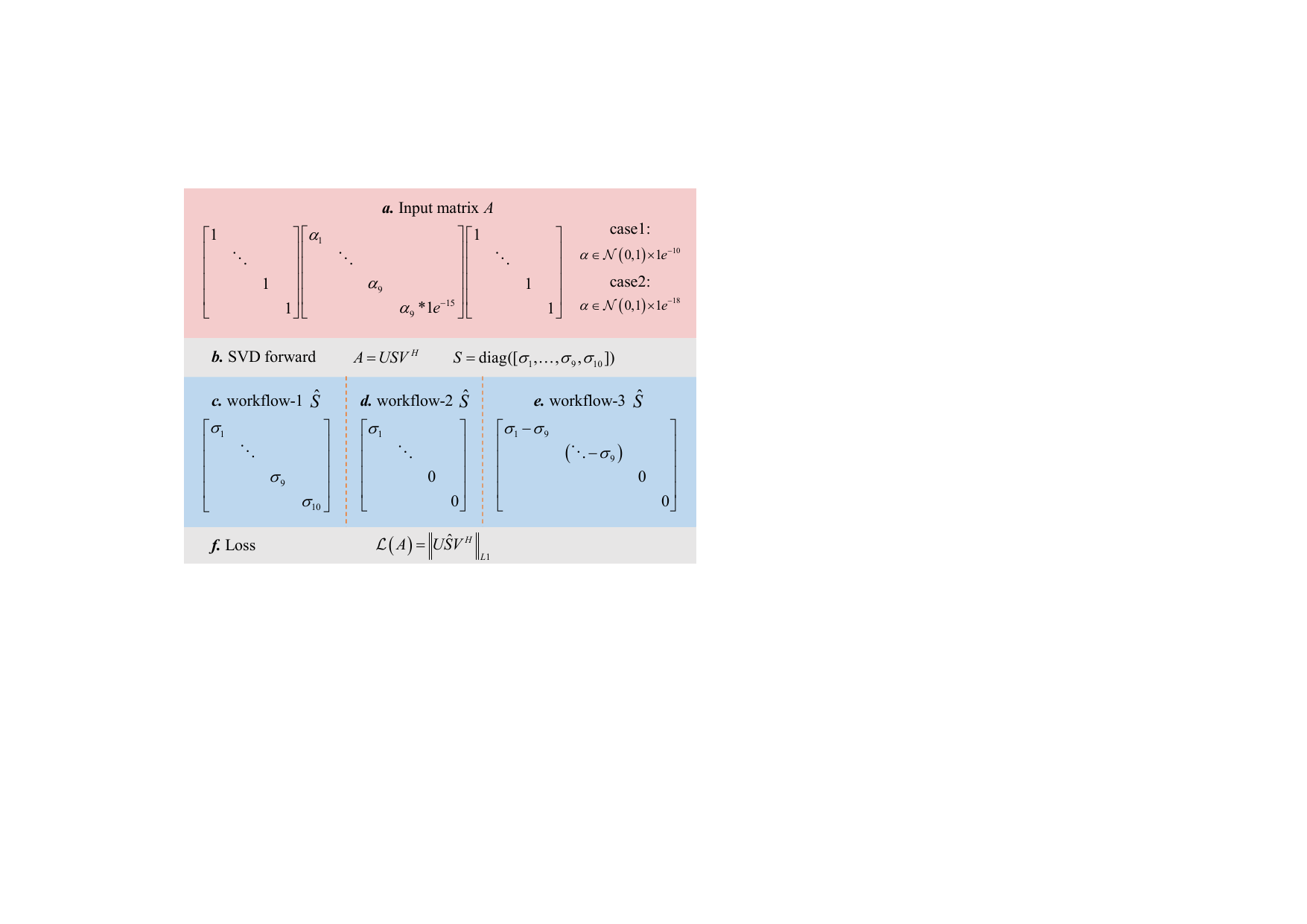}
	\caption{The experimental settings for evaluating the efficacy of SVD-inv.}
	\label{fig:workflow}
\end{figure*}

% 对于输入矩阵A的奇异值的设置，我们生成一个的随机数sigma0，然后令sigma1=sigma0+sigma0*1e-15。sigma0和sigma1联合额外生成的8个随机数构成矩阵A的奇异值。随机数从标准高斯分布中生成，并在case1时乘以1e-10，case2时乘以1e-18。现在我们考察奇异值计算中Eq.(7)的矩阵F，其最大值由1/(sigma1^2-sigma0^2)给出。因此在case1时，最大值大概在1e35量级，正好处在Float32溢出的边缘。由于梯度计算过程中，F会与很多矩阵相乘。因此，当反向传播在Float32下进行时，F大概率会发生溢出。而在case2中，最大值大概在1e51量级，超出了Float32的范围，因此计算梯度时必然会发生溢出。换句话说，在Float32精度下计算梯度时，case1代表了两奇异值极度接近的情况，而case2代表了两奇异值相等的情况。还记得我们在设置A时定义的是Float64精度，因此无论case1还是case2，我们都可以在Float64精度下得到梯度的参考值而没有溢出风险。
For setting the singular values of the input matrix \( A \), we generate a random number \( \sigma_0 \), then set \( \sigma_1 = \sigma_0 + \sigma_0 \times 10^{-15} \). These values, along with an additional eight random numbers, constitute the singular values of matrix \( A \). The random numbers are drawn from a standard Gaussian distribution and are scaled by \( 10^{-10} \) in case 1 and by \( 10^{-18} \) in case 2. Next, we examine the matrix \( F \) from Eq.(7), whose maximum value is given by \( 1 / (\sigma_1^2 - \sigma_0^2) \). In case 1, the maximum value is approximately \( 10^{35} \), which is close to the overflow threshold for Float32 precision. Because matrix \( F \) is multiplied by many matrices during gradient computation, there is a high likelihood of overflow when backpropagation is performed in Float32 precision. In case 2, the maximum value is approximately \( 10^{51} \), exceeding the Float32 range, which means overflow will inevitably occur during gradient computation. In other words, when computing the gradient in Float32 precision, case 1 represents a scenario where two singular values are extremely close, while case 2 represents a scenario where the two singular values are effectively equal. It is important to remember that matrix \( A \) was defined with Float64 precision. Therefore, for both case 1 and case 2, we can obtain reference gradient values without the risk of overflow when using Float64 precision.

% 在Fig.\ref{fig:workflow}.b中，我们分别利用Float64精度的SVD和Float32精度下的SVD-tf、SVD-clip、SVD-taylor以及SVD-inv进行计算。利用Pytorch的自动微分机制，我们计算了这四种方法的梯度，并与Float64精度下的SVD的标准梯度进行比较，并计算了MSE误差。我们生成了1000个随机矩阵A，并将四种SVD的梯度MSE误差进行累加。在两种case和三种workflow下的结果如表所示。从结果上可以发现，我们所提出的SVD-inv具有最小的误差，这得益于Moore-Penrose伪逆所给予的理论保证。
In Fig.\ref{fig:workflow}.b, we used the Float64 precision SVD and four differentiable SVD methods under Float32 precision, namely SVD-tf, SVD-clip, SVD-taylor, and SVD-inv. Using the automatic differentiation mechanism in PyTorch, we calculated the gradients for these four methods and compared them to the label gradients obtained from the Float64 precision SVD, computing the mean squared error (MSE) of the gradient errors. We generated 1000 random matrices \( A \) and accumulated the MSE gradient errors for the four SVD methods. The results for the two cases and three workflows are shown in the Tab.\ref{tab:efficacy}. From the results, it is evident that our proposed SVD-inv method exhibits the smallest error, which can be attributed to the theoretical guarantees provided by the Moore-Penrose pseudoinverse.

% 1000次实验的Float32精度下的四种可微分SVD相对于Float64标准SVD累加MSE梯度误差。
\begin{table}[htbp]
	\centering
	\caption{Cumulative MSE of gradient errors for 1000 experiments for four differentiable SVD methods in Float32 against the standard SVD in Float64.}
	\resizebox{\linewidth}{!}{\begin{tabular}{cccccc}
	  \toprule
	  case  & workflow & SVD-tf & SVD-clip & SVD-taylor & SVD-inv \\
	  \midrule
	  \multirow{3}[2]{*}{1} & 1     & 1412.5 & 1412.5 & 1828.6 & 38.9 \\
			& 2     & 1086.6 & 1086.6 & 6045.1 & 30.2 \\
			& 3     & 1347.7 & 1347.7 & 1663.8 & 640.3 \\
	  \midrule
	  \multirow{3}[2]{*}{2} & 1     & 1611.7 & 1611.7 & 9487.2 & 37.1 \\
			& 2     & 4945.1 & 4945.1 & 15343.9 & 3833.0 \\
			& 3     & 1443.9 & 1443.9 & 7972.0 & 712.0 \\
	  \bottomrule
	  \end{tabular}}%
	\label{tab:efficacy}%
  \end{table}%

\subsection{Applications in IIPs}
% 我们在两种应用下对我们的SVD-inv进行了实验验证，分别是彩色图像压缩感知和动态磁共振重建。选择这两种应用的原因在于，其目标恢复图像都是张量，具有较强的低秩特性，而如果是二维图像，其本身低秩性并不明显。我们利用张量奇异值分解下的张量核范数来实现低秩约束，构建重建模型，并利用ADMM算法进行求解，进而最终展开为DUN。在DUN中，低秩约束的利用是通过SVT来实现的，其中SVD必不可少。
We conducted experiments for our SVD-inv in two applications: color image compressive sensing and dynamic MRI reconstruction. We chose these two applications because their target recovered images are tensors, exhibiting strong low-rank characteristics. In contrast, if the images were two-dimensional, their inherent low-rank nature might not be as evident. We utilized the tensor nuclear norm under tensor singular value decomposition to enforce low-rank constraints, constructing the reconstruction models. Subsequently, we employed the ADMM algorithm for solving, ultimately unfolding into a DUN \cite{zhang2020video,zhang2022t}. Within DUN, the utilization of low-rank constraints is achieved through SVT, where SVD is indispensable.

\subsubsection{Color image compressive sensing}
The reconstructed model can be formulated as follows:
\begin{equation}
	\label{eq:cs}
	\min_{\mathcal{X}} \left\|P_\Omega(\mathcal{X})-\mathcal{Y}\right\|_F^2+\lambda\sum_{i=1}^{3}\left\|\mathsf{T}\left(\mathcal{X}\right)^{(i)}\right\|_*,
\end{equation}
where \( \mathcal{X}\in\mathbb{R}^{H \times W \times 3} \) is the target color image tensor, \( P_\Omega(\cdot) \) is the compressive sensing operator, $\Omega$ denotes the sampling set, \( \mathcal{Y} \) is the measurement image, \( \lambda \) is the trade-off parameter, $\mathsf{T}: \mathbb{R}^{H \times W \times 3} \rightarrow \mathbb{R}^{H \times W \times 3}$ denotes the transformation that features tensor low-rank properties \cite{zhang2022t}, and $\mathsf{T}\left(\mathcal{X}\right)^{(i)} = \mathsf{T}\left(\mathcal{X}\right)(:,:,i) \in \mathbb{R}^{H\times W}$. The first term is the data fidelity term, and the second term is the tensor nuclear norm \cite{zhang2022t}. 

We utilized the ADMM algorithm to solve this problem,
\begin{equation}
	\label{eq:admm}
	\begin{cases}
        &\mathcal{Z}_{n} = \mathsf{T}^H \circ \operatorname{SVT}_{{\lambda}/{\mu}} \circ \mathsf{T}(\mathcal{X}_{n-1} + \mathcal{L}_{n-1}) \\
        &\mathcal{X}_{n} = P_\Omega(\mathcal{Y})+P_{\Omega^C}(\mathcal{Z}_{n}-\mathcal{L}_{n-1})\\
        &\mathcal{L}_{n} = \mathcal{L}_{n-1} - \eta (\mathcal{Z}_{n}-\mathcal{X}_{n})
  \end{cases},
\end{equation}
where $\Omega^C$ denotes the complementary set of $\Omega$, $\mathcal{Z}$ is the auxiliary variable, $\mathcal{L}$ is the Lagrange multiplier, $\mu$ and $\eta$ are the auxiliary parameters. The $\mathcal{Z}_{n}$ step is solved by the transformed tensor singular value thresholding algorithm \cite{zhang2022t}, where $\mathsf{T}^H$ is the adjoint operator of $\mathsf{T}$, $\operatorname{SVT}_{{\lambda}/{\mu}}$ is the SVT operator with the threshold $\lambda/\mu$ for each channel of the input tensor data, and the symbol `$\circ$' denotes the composition of operators or functions.

Then we unfold the ADMM algorithm \eqref{eq:admm} into a DUN \cite{zhang2020video,zhang2022t}, as shown in Figure \ref{fig:net}. The three circular nodes represent the three steps in \eqref{eq:admm}, respectively. As for the $\mathcal{Z}_n$ step, we use two separate CNNs to learn the transformation $\mathsf{T}$ and its adjoint operator $\mathsf{T}^H$. We treat the R, G, and B of the color images as three channels and input them into the CNNs. The two CNNs in $\mathcal{Z}$ layer have the same structure, which is a 3-layer 2D convolutional neural network with 16, 16, and 3 filters, respectively. The kernel size is set to 3, and the stride is set to 1. The SVT operator, the $\mathcal{X}_n$ and $\mathcal{L}_n$ steps are performed entirely according to mathematical matrix operations within the framework of deep learning, except that the parameters $\lambda$, $\mu$, and $\eta$ are set to be learnable parameters in DUN. The input of this DUN is the compressed sensing image, and the output is the reconstructed image. We utilized the mean squared error (MSE) as the loss function for training. 

We adopted all the images of cifar100 dataset \cite{krizhevsky2009learning} to train the network, resulting in a total of 60k training images. We chose 14 benchmark color images \footnote{https://sipi.usc.edu/database/database.php} as the test set, as shown in Figure \ref{fig:cimgs}. The network was trained using the Adam optimizer for 50 epochs with a batch size of 128. The initial learning rate was set to 0.001, and an exponential decay with 0.95 rate was adopted. We implemented the DUN using PyTorch and trained it on a single NVIDIA Geforce 1080Ti GPU.

\begin{figure}
	\centering
	\includegraphics[width=1.0\columnwidth]{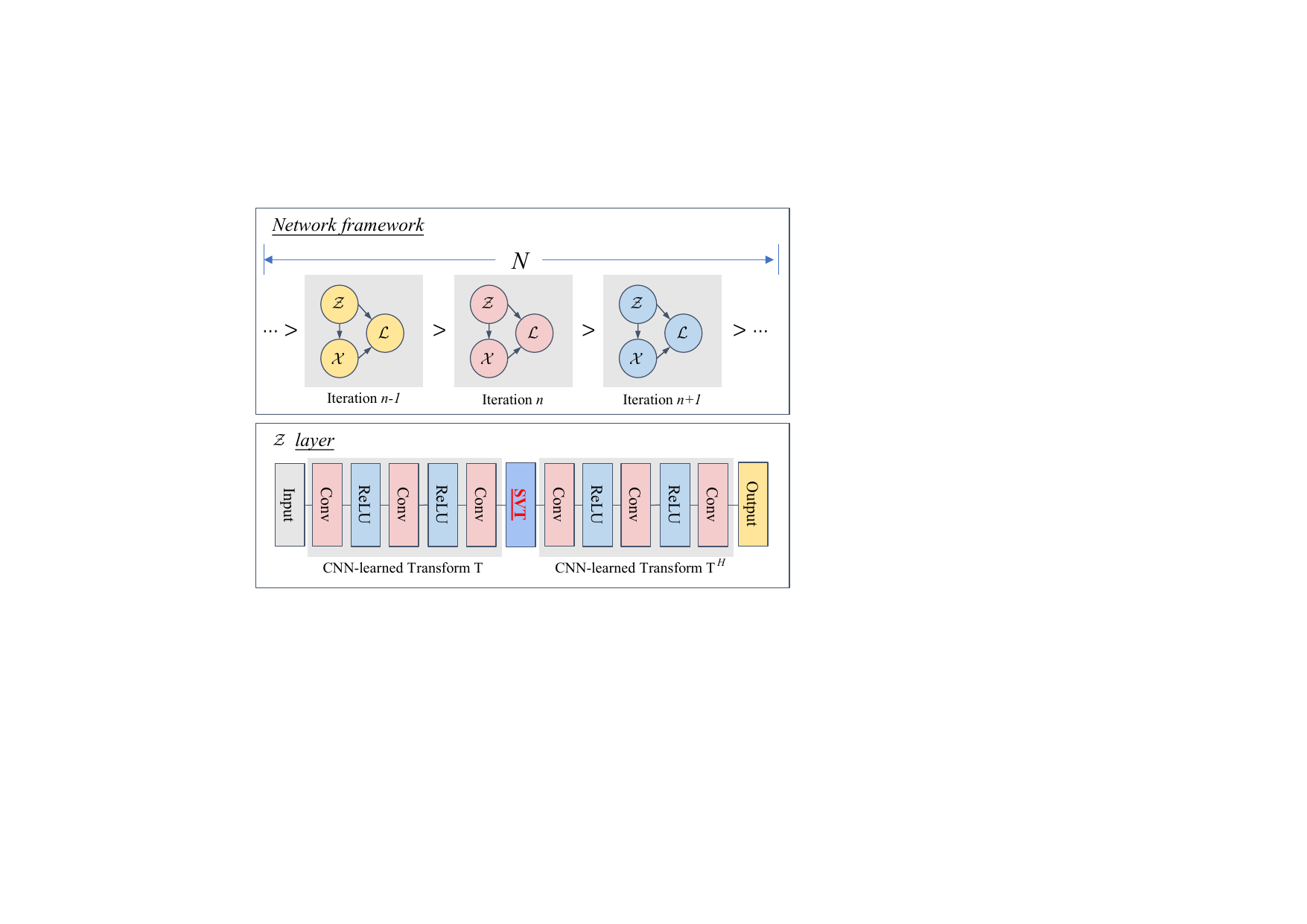}
	\caption[short]{The framework of the used DUN.}
	\label{fig:net}
\end{figure}

\begin{figure}
	\centering
	\includegraphics[width=1.0\columnwidth]{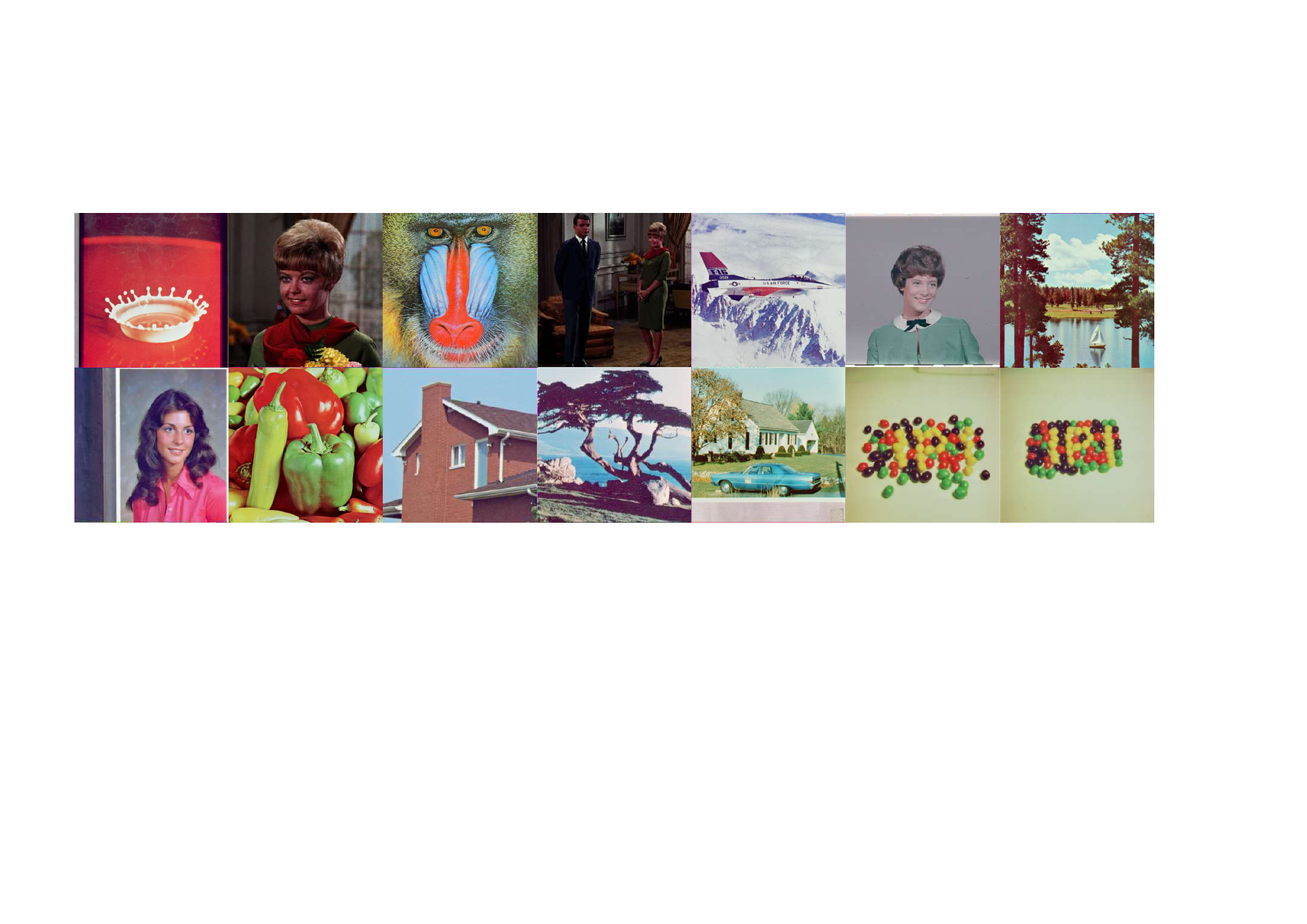}
	\caption[short]{The benchmark color images as the test set.}
	\label{fig:cimgs}
\end{figure}

\subsubsection{Dynamic MRI reconstruction}
The dynamic MRI reconstruction model can be formulated as follows:
\begin{equation}
	\label{eq:mri}
	\min_{\mathcal{X}} \left\|\mathsf{A}(\mathcal{X})-\mathbf{b}\right\|_2^2+\lambda\sum_{i=1}^{T}\left\|\mathsf{T}\left(\mathcal{X}\right)^{(i)}\right\|_*,
\end{equation}
where $\mathsf{A}:\mathbb{C}^{H \times W \times T} \rightarrow \mathbb{C}^{M}$ is the Fourier sampling operator, $T$ denotes the number of frames, $\mathbf{b} \in \mathbb{C}^{M}$ is the measurement data, and the other symbols are the same as in the compressive sensing model. In this experiment, we focus on the single coil scenario, where $\mathsf{A} = \mathsf{S} \circ \mathsf{F}$. $\mathsf{S}$ is the sampling operator like $P_\Omega$ in \eqref{eq:admm}, and $\mathsf{F}$ is the Fourier transform. 

We could use ADMM to solve the above problem, which exhibits a similar form as \eqref{eq:admm} except for the $\mathcal{X}_n$ step. The $\mathcal{X}_n$ step in this case is formulated as follows:
\begin{equation}
	\label{eq:admm2}
	\mathcal{X}_{n} = (\mathsf{A}^H \circ \mathsf{A} + \mu)^{-1}(\mathsf{A}^H(\mathbf{b})+\mu \mathcal{Z}_{n} - \mu \mathcal{L}_{n-1})
\end{equation}

Then, unfolding the ADMM algorithm \eqref{eq:admm2} into a DUN \cite{zhang2022t} results in a similar network structure as shown in Figure \ref{fig:net}. We separate the complex data of MRI images into real and imaginary parts, and treat them as two channels to input into the CNNs. Thus, the CNN in $\mathcal{Z}$ layer is a 3-layer 3D convolutional neural network with 16, 16, and 2 filters, respectively. The kernel size is set to 3, and the stride is set to 1. The output of the first CNN combines the 2 channels into complex data again to input into the SVT operator to exploit the low-rank properties. The input of the DUN is the measurement data, and the output is the reconstructed MRI image. We utilized MSE as the loss function for training.

We utilized the OCMR dataset \cite{chen2020ocmr} to train the network. The dataset contains 204 fully sampled dynamic MRI raw data. We allocated 124 data for training, 40 data for validation, and 40 data for testing. We cropped the images to $128 \times 128 \times 16$ for data augmentation and obtained 1848 fully sampled training samples. The network was trained using the Adam optimizer for 50 epochs with a batch size of 1. Other training settings were the same as in the compressive sensing experiment.
Three types of sampling masks \cite{zhang2022t} are adopted, i.e., pseudo-radial sampling, variable density sampling (VDS) and VISTA. The sampling masks and the example images of OCMR are shown in Figure \ref{fig:ocmr}.

\begin{figure*}
	\centering
	\includegraphics[scale=0.7]{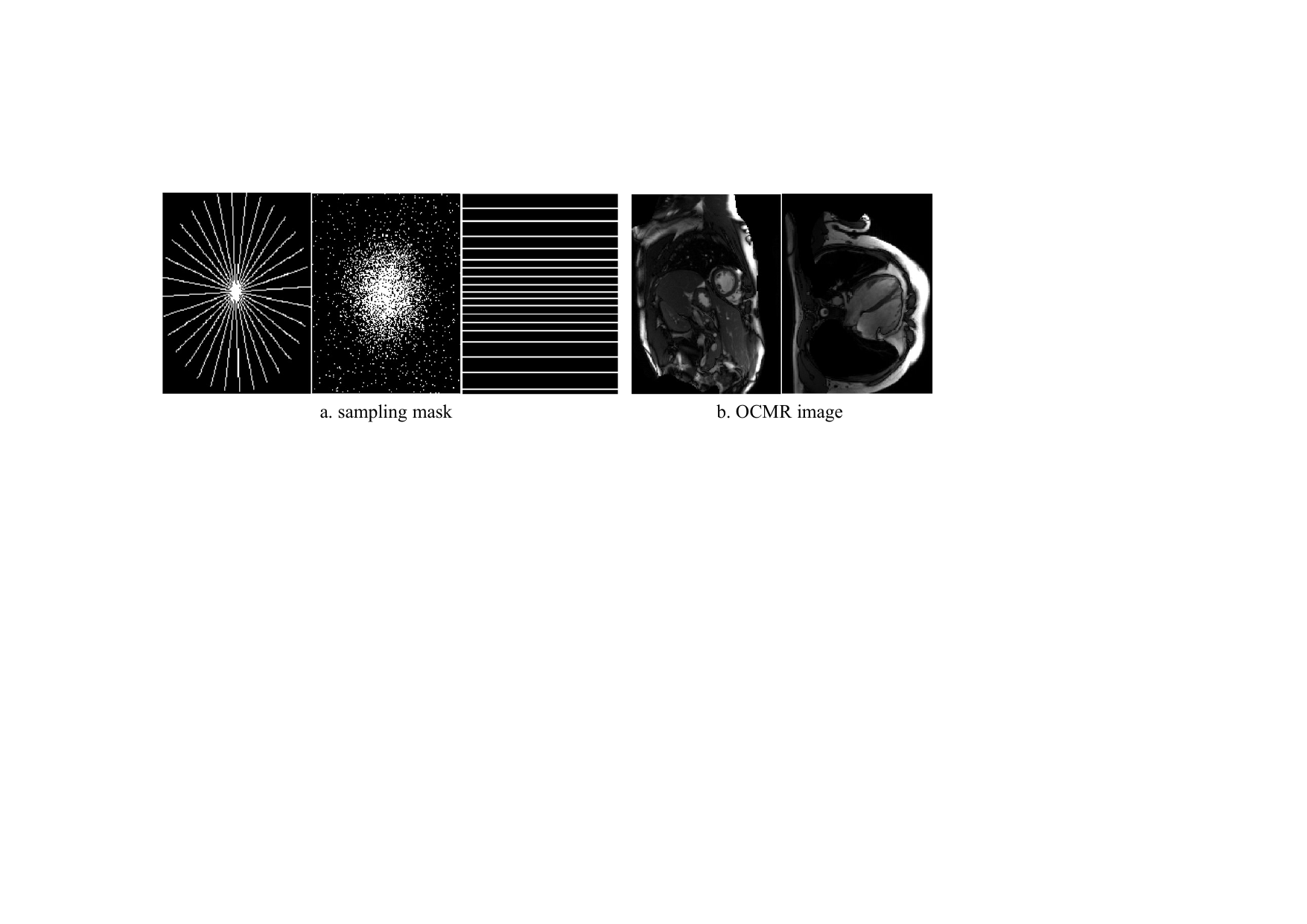}
	\caption[short]{The sampling masks and the example images of OCMR. The sampling masks from left to right are the pseudo-radial sampling, variable density sampling and VISTA mask, respectively.}
	\label{fig:ocmr}
\end{figure*}

\subsubsection{Results}

% Table generated by Excel2LaTeX from sheet 'Sheet1'
\begin{table*}[htbp]
	\centering
	\caption{The results of the color image compressive sensing. A range of CS ratios $\{4\%, 10\%, 30\%, 40\%, 50\%\}$ are evaluated. The results are reported by the mean value of PSNR, SSIM, and MSE for all the test images. The MSEs are in the unit of 1e10. The best results are highlighted in bold.}
	\resizebox{\linewidth}{!}{\begin{tabular}{l|ccc|ccc|ccc|ccc|ccc}
	  \toprule
			& \multicolumn{3}{c|}{4\%} & \multicolumn{3}{c|}{10\%} & \multicolumn{3}{c|}{30\%} & \multicolumn{3}{c|}{40\%} & \multicolumn{3}{c}{50\%} \\
			& PSNR↑ & SSIM↑ & MSE↓  & PSNR↑ & SSIM↑ & MSE↓  & PSNR↑ & SSIM↑ & MSE↓  & PSNR↑ & SSIM↑ & MSE↓  & PSNR↑ & SSIM↑ & MSE↓ \\
	  \midrule
	  SVD-torch & \multicolumn{3}{c|}{\textbackslash{}} & \multicolumn{3}{c|}{\textbackslash{}} & \multicolumn{3}{c|}{\textbackslash{}} & \multicolumn{3}{c|}{\textbackslash{}} & \multicolumn{3}{c}{\textbackslash{}} \\
	  SVD-tf & \textbf{22.080} & \textbf{0.284} & \textbf{42.16} & 25.377 & 0.429 & 20.41 & 30.952 & 0.671 & 6.287 & 32.794 & 0.741 & 4.314 & \textbf{34.768} & \textbf{0.803} & \textbf{2.943} \\
	  SVD-clip & 22.080 & 0.284 & 42.16 & 25.377 & 0.429 & 20.41 & 30.952 & 0.671 & 6.287 & 32.794 & 0.741 & 4.314 & 34.768 & 0.803 & 2.943 \\
	  SVD-taylor & 21.877 & 0.285 & 44.45 & \textbf{25.503} & \textbf{0.437} & \textbf{20.10} & 30.837 & 0.666 & 6.439 & 32.707 & 0.740 & 4.388 & 34.583 & 0.800 & 3.036 \\
	  SVD-inv & 22.072 & 0.284 & 42.29 & 25.426 & 0.430 & 20.21 & \textbf{30.952} & \textbf{0.671} & \textbf{6.286} & \textbf{32.795} & \textbf{0.741} & \textbf{4.314} & 34.768 & 0.803 & 2.944 \\
	  \bottomrule
	  \end{tabular}}%
	\label{tab:cs}%
  \end{table*}%

% Table generated by Excel2LaTeX from sheet 'Sheet1'
\begin{table*}[htbp]
	\centering
	\caption{The results of the dynamic MRI reconstruction. The results are reported by the mean value of PSNR, SSIM, and MSE for all the test images. The MSEs are in the unit of 1e-5. The best results are highlighted in bold.}
	\resizebox{\linewidth}{!}{\begin{tabular}{l|ccc|ccc|ccc|ccc|ccc}
	  \toprule
			& \multicolumn{3}{c|}{radial-16lines} & \multicolumn{3}{c|}{radial-30lines} & \multicolumn{3}{c|}{vds-8X} & \multicolumn{3}{c|}{vista-8X} & \multicolumn{3}{c}{vista-12X} \\
			& PSNR↑ & SSIM↑ & MSE↓  & PSNR↑ & SSIM↑ & MSE↓  & PSNR↑ & SSIM↑ & MSE↓  & PSNR↑ & SSIM↑ & MSE↓  & PSNR↑ & SSIM↑ & MSE↓ \\
	  \midrule
	  SVD-torch & 40.653 & 0.979 & 10.17 & 42.779 & 0.987 & 6.178 & \multicolumn{3}{c|}{\textbackslash{}} & \multicolumn{3}{c|}{\textbackslash{}} & \multicolumn{3}{c}{\textbackslash{}} \\
	  SVD-tf & \textbf{40.572} & \textbf{0.979} & \textbf{10.33} & 42.873 & 0.987 & 6.046 & 40.557 & 0.979 & 10.48 & 37.381 & 0.960 & 21.78 & \textbf{26.952} & \textbf{0.816} & \textbf{246.1} \\
	  SVD-clip & 40.572 & 0.979 & 10.33 & 42.873 & 0.987 & 6.046 & 40.557 & 0.979 & 10.48 & 37.381 & 0.960 & 21.78 & 26.952 & 0.816 & 246.1 \\
	  SVD-taylor & 40.497 & 0.979 & 10.51 & 42.826 & 0.987 & 6.094 & 40.450 & 0.979 & 10.71 & \textbf{37.846} & \textbf{0.962} & \textbf{19.31} & 26.735 & 0.810 & 252.6 \\
	  SVD-inv & 40.569 & 0.979 & 10.33 & \textbf{42.886} & \textbf{0.987} & \textbf{6.033} & \textbf{40.597} & \textbf{0.979} & \textbf{10.39} & 37.142 & 0.959 & 23.35 & 26.833 & 0.813 & 251.9 \\
	  \bottomrule
	  \end{tabular}}%
	\label{tab:mri}%
  \end{table*}%

We evaluated our proposed SVD-inv with four types of SVD: SVD-torch, SVD-taylor, SVD-tf and SVD-clip. We utilized the built-in SVD function provided by PyTorch to realize both forward and backward propagation for SVD-torch, while the others were implemented by PyTorch in forward propagation and modified the gradients in backward propagation. More details of the compared SVDs have been explained in the introduction. It should be noted that for a fair comparison, we set all random seeds as 3407 \cite{picard2021torch} in the training of all DUNs.

In the application of color image compressed sensing, we evaluated the performance of the DUNs that utilized different SVDs for a range of CS ratios $\{4\%, 10\%, 30\%, 40\%, 50\%\}$. The PSNR and SSIM values of the reconstructed images are shown in Table \ref{tab:cs}. In the application of dynamic MRI reconstruction, we utilized three different sampling masks as mentioned in Figure \ref{fig:ocmr}. Also, different number of lines for radial sampling, i.e., 16 and 30, were adopted. For the VDS and VISTA masks, different acceleration factors are adopted, i.e., 8X and 12X. The quantitative results for all compared SVDs are shown in Table \ref{tab:mri}.

From the results, we can observe that the SVD-tf and SVD-clip exhibit exactly the same performance in both applications and all the detailed cases. In the derivation of these two SVDs, the difference between them and our SVD-inv lies in Eq.\eqref{eq:OU}, i.e., the following relationship holds in their cases:
\begin{equation}
	\label{eq:ou}
	d\Omega_U = F \circ \left[dPS+SdP^H\right],
\end{equation}
where $F$ is in the form of \eqref{eq:intro_F}. In the case of $\sigma_i = \sigma_j = \sigma$, SVD-tf sets the elements in $F$ to zero, while SVD-clip sets them to a relatively large value, such as $1e6$. From \eqref{eq:solve31} and \eqref{eq:solve32}, we can see that if $\sigma_i = \sigma_j$, $(dP)_{ij}=-(dP)^*_{ji}$. Therefore, in the points that two singular values equal, the elements in the matrix $\left[dPS+SdP^H\right]$ are zeros, and the elements in $F$ will not affect the gradients. We can conclude that the SVD-tf and SVD-clip are equivalent. However, it is also worth noting that both SVD-tf and SVD-clip are heuristic, lacking theoretical guarantees when duplicate singular values occur.

% 我们提出的SVD-inv在大多数情况下取得了较好的重建结果，这归因于对SVD导数的精准计算。值得一提的是，在dynamic MRI reconstruction的radial采样情况下，SVD-torch罕见的能够在稳定的情况下训练完所有的epoch。在这些情况下，我们发现SVD-inv能够取得和SVD-torch近乎一致的表现，这同时也印证了我们的推导的正确性。不过，SVD-inv和SVD-torch仍然得到了不同的定量指标。这可能因为在具体计算时采用的具体操作略有差别，例如计算时将0奇异值去掉和保留或将使得得到的梯度具有微小差别。这是完全可以接受的。另外，在所有的定量指标中，我们注意到SVD-taylor相较于其他的SVDs具有显著的差异，这是因为即使在梯度具有高度稳定性的情况下（两奇异值具有显著差异），SVD-taylor仍然对梯度进行近似，这不可避免的引入了偏差，同时，其近似方法仅在特定情况下有效。我们已经在2.2中进行了详细的讨论。
Our proposed SVD-inv has achieved better reconstruction results in most cases, which is attributed to the precise calculation of the SVD derivatives. It is worth mentioning that under the radial sampling cases in dynamic MRI reconstruction, SVD-torch is exceptionally able to train all epochs stably. In these cases, we found that SVD-inv can achieve performance nearly identical to that of SVD-torch, which also corroborates the correctness of our derivation. However, SVD-inv and SVD-torch still obtained different quantitative metrics. This may be due to slight differences in specific operations adopted during the actual calculation, such as removing or retaining zero singular values, which could lead to minor differences in the resulting gradients \cite{townsend2016differentiating}. This is completely acceptable. Additionally, among all the quantitative metrics, we noticed that SVD-taylor has a significant difference compared to other SVDs, which is because even when the gradient has high stability (two singular values have significant differences), SVD-taylor still approximates the gradient, inevitably introducing bias. Moreover, its approximation method is only effective under specific conditions, which have been discussed in detail in section \ref{sec:svd_grad}.

\section{Discussion and Conclusion} 
In this paper, we proposed a novel differentiable SVD, which we refer to as SVD-inv. We have revealed that the nondifferentiablity of SVD is due to an underdetermined system of linear equations in the derivation of the SVD gradient. We introduced the Moore-Penrose pseudoinverse to solve this system and obtained the minimum norm least-squares solution, thus proposing the SVD-inv. We have demonstrated that the SVD-inv is applicable to both general matrices and positive semi-definite matrices. A comprehensive analysis of the numerical stability of the SVD-inv in the context of IIPs has been provided. As the unstable gradients of SVD mainly occur when two singular values are equal, our SVD-inv can effectively handle this situation even beyond the context of IIPs, thus having broad applicability. Experiments in two applications, color image compressive sensing and dynamic MRI reconstruction, have verified the effectiveness of the SVD-inv.

\section*{Acknowledgments}
This work is supported by the National Natural Science Foundation of China [grant number 62371167]; the Natural Science Foundation of Heilongjiang [grant number YQ2021F005].

\bibliographystyle{model2-names.bst}
\bibliography{refs}  %%% Uncomment this line and comment out the ``thebibliography'' section below to use the external .bib file (using bibtex) .

\end{document}